\theoremstyle{plain}
\newtheorem{theorem}{Theorem}[section]
\newaliascnt{lemma}{theorem}
\newtheorem{lemma}[lemma]{Lemma}
\newaliascnt{conjecture}{theorem}
\newtheorem{conjecture}[conjecture]{Conjecture}
\newaliascnt{remark}{theorem}
\newtheorem{remark}[remark]{Remark}
\newaliascnt{corollary}{theorem}
\newtheorem{corollary}[corollary]{Corollary}
\begin{document}

\title{Proof of the List Coloring Conjecture for line perfect multigraphs}
\author{Alexey Gordeev\\
Saint Petersburg Department of the Steklov\\
Mathematical Institute\\
\small\tt gordalserg@gmail.com\\}
\date{}

\maketitle

\begin{abstract}We prove that for a line perfect multigraph the chromatic index is equal to the list chromatic index. This is a generalization of Galvin's result on bipartite multigraphs.
	
Soon after the first version was submitted to arxiv, I found out that the same result has already been achieved in an older paper by Peterson and Woodall \cite{PAW}, by a similar but not entirely the same method.
\end{abstract}

\section{Introduction}

All graphs we consider here are finite undirected multigraphs without loops unless said otherwise.

Consider a graph $G$. The \emph{line graph} $L(G)$ is a graph with a vertex for each edge of $G$, where vertices in $L(G)$ are adjacent if and only if the corresponding edges in $G$ are adjacent.

The \emph{chromatic number} $\chi(G)$ of a graph $G$ is the minimum number of colors which can be assigned to vertices in such a way that adjacent vertices have different colors (such an assignment of colors is called \emph{proper coloring}). The \emph{chromatic index} $\chi'(G)$ is the chromatic number of $L(G)$.

Consider a graph $G=(V,E)$ and a function $f:V\to \mathbb{N}$, which assigns a non-negative integer to each vertex. $G$ is \emph{$f$-choosable} if, given sets of colors $\{A_v\}_{v\in V}$, $|A_v|=f(v)$, one can always choose a color for each vertex $\{c_v\}_{v\in V}$, $c_v\in A_v$, so that adjacent vertices have different colors. $G$ is \emph{$n$-choosable} if it is $f$-choosable for a constant function $f(v)=n$. The \emph{choice number} $\chi_l(G)$ is the least $n$ such that $G$ is $n$-choosable. The \emph{list chromatic index} $\chi_l'(G)$ is the choice number of $L(G)$.

One may consider sets $A_v=\{1,\dots,\chi(G)\}$, $v\in V$, to see that $\chi(G)\leq\chi_l(G)$ (and $\chi'(G)\leq \chi_l'(G)$). The \emph{list coloring conjecture} states that for line graphs this inequality turns to equality (see \cite{HAC}, p.$509$ for the history).

\begin{conjecture}[List coloring conjecture (LCC)]
$\chi'(G)=\chi_l'(G)$ for any multigraph $G$.
\end{conjecture}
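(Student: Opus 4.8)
Since the inequality $\chi'(G)\le\chi_l'(G)$ is immediate from the remark preceding the conjecture (take all lists equal to $\{1,\dots,\chi'(G)\}$), the entire content lies in the reverse bound $\chi_l'(G)\le\chi'(G)$. Writing $k=\chi'(G)$, the plan is to run the \emph{kernel method}. I would orient the line graph $L(G)$ into a digraph $D$ so that every out-degree satisfies $d^+_D(e)\le k-1$, and so that $D$ is \emph{kernel-perfect}, meaning every induced subdigraph possesses a kernel (an independent set $S$ such that every vertex outside $S$ has an out-neighbour in $S$). Granting both properties, the Bondy--Boppana--Siegel lemma --- the same device Galvin used for bipartite multigraphs --- yields that $L(G)$ is $f$-choosable whenever $f(e)>d^+_D(e)$ for every $e$; taking $f\equiv k$ gives $\chi_l'(G)\le k$, which combined with the trivial direction closes the equality.

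The orientation I would build starts from an optimal proper edge colouring $c\colon E\to\{1,\dots,k\}$, which exists by definition of $k=\chi'(G)$. At each vertex $v$ the incident edges receive distinct colours and form a clique in $L(G)$, and I would orient each such clique as the transitive tournament induced by increasing colour. The subtlety, already present in Galvin's argument, is that an edge $e=uv$ lies in the two cliques at $u$ and at $v$, so naively its out-degree can reach $2(k-1)$. Galvin's resolution exploits a $2$-colouring of $V(G)$: for $G=(X\cup Y,E)$ one orients increasingly at vertices of $X$ and decreasingly at vertices of $Y$, whereupon an edge coloured $i$ has at most $(k-i)$ out-neighbours on its $X$-side and at most $(i-1)$ on its $Y$-side, for a total of at most $k-1$. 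Simultaneously, this increasing/decreasing split makes kernels of induced subdigraphs correspond to stable matchings, whose existence (Gale--Shapley) furnishes kernel-perfectness for free. The goal would be to emulate both effects in general: a global rule that balances the two one-sided out-degrees down to $k-1$ and that certifies a kernel in every induced subdigraph.

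The main obstacle --- and the reason this statement is, to the best of my knowledge, still open in full generality --- is that both halves of Galvin's mechanism collapse once $G$ is not bipartite. A consistent increasing/decreasing assignment requires a proper $2$-colouring of $V(G)$, so an odd cycle destroys the clean $(k-i)+(i-1)=k-1$ cancellation of out-degrees, and, worse, destroys the correspondence with stable matchings that guaranteed the kernels. Structurally, the deepest available input is the theorem of Boros and Gurvich that \emph{perfect} graphs are kernel-solvable, which yields kernel-perfect clique orientations precisely when $L(G)$ is perfect --- that is, exactly in the line perfect case named in this paper's title. For a general multigraph, however, $L(G)$ contains odd holes (already $L(C_5)=C_5$), so it is not perfect, and no construction is known that produces a kernel-perfect orientation of out-degree below $\chi'(G)$.

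To attack the statement exactly as worded I would therefore have to go beyond the kernel method in one of two directions. One route keeps the orientation but replaces kernel-perfectness with a weaker sufficient condition for $f$-choosability that survives odd structure: an algebraic criterion via Alon's Combinatorial Nullstellensatz, estimating the relevant coefficient of the graph polynomial of $L(G)$, or a Scarf-lemma/fractional relaxation producing approximate kernels. The other route abandons exact colouring for the probabilistic absorption behind Kahn's theorem $\chi_l'(G)=(1+o(1))\chi'(G)$ and tries to remove the error term by local correction. I expect the hard part to be precisely this elimination of the odd-cycle obstruction: every result in the literature either restricts the host graph (bipartite, line perfect, bounded degree) or settles for asymptotic equality, and closing that gap is the genuinely open core of the conjecture.
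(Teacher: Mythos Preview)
There is no proof in the paper to compare against: the statement is presented there as an open conjecture, not as a theorem. The paper establishes only the special case in which $G$ is line perfect (its \autoref{main}), precisely the class you single out via the Boros--Gurvich/Maffray kernel-solvability of perfect line graphs. Your write-up is not a proof but a correct diagnosis of why the kernel method stalls beyond that class, together with a survey of possible attack lines; as such it is accurate, and there is nothing further to grade.
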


The LCC has been proven in some special cases. It has been shown to hold for complete graphs of odd degree \cite{HAJ}. The polynomial method was used by different authors to prove the LCC for regular planar multigraphs of class $1$ \cite{EAG} (class $1$ graphs are those whose chromatic index is equal to the maximum degree of a vertex); for complete graphs of degree $p+1$, where $p$ is prime \cite{S}. It was shown to hold asymptotically in some sense \cite{K}. Galvin used the so-called kernel method to prove the LCC for bipartite multigraphs \cite{G}. To achieve it, he used Maffray's characterization of a line perfect graph (\cite{M}, Theorem $1$), a special case of which is a bipartite graph. Naturally, a question arises if Galvin's proof can be extended to line perfect graphs. In this paper, we show that this indeed can be done, though we have to employ some other techniques as well as Galvin's kernel method.

\section{LCC for line perfect multigraphs}

A \emph{clique} is a set of mutually adjacent vertices. An \emph{independent set} is a set of mutually non-adjacent vertices.

Consider a digraph $D=(V,\vec{E})$ with no loops and with at most one arc in each direction between any two vertices. We denote the existence of arc from $v$ to $u$ in $D$ as $v\to u$. The \emph{outdegree} of a vertex $v$ is $d_{out}(v)=|\{u\in V:v\to u\}|$. The \emph{closed neighborhood} of $v$ is $N(v)=\{u\in V:v\to u\}\cup\{v\}$. Clearly, $|N(v)|=d_{out}(v)+1$. The \emph{underlying graph} of $D$ is an undirected graph $G=(V,E)$, where $E=\{\{u,v\}: u\to v \text{ or } v\to u\}$. Note that a pair of opposite arcs corresponds to just one edge in the underlying graph.

A \emph{kernel} of $D$ is an independent set $K\subseteq V$ such that $N(v)\cap K\neq\emptyset$ for any $v\in V$ (or, equivalently, for each $v\in V\setminus K$ there is $u\in K$ such that $v\to u$).

The proof of the next lemma can be found in \cite{G} (see Lemma $2.1$).

\begin{lemma}\label{lem1}
Let $D$ be a digraph in which any induced subgraph has a kernel, $f:V\to \mathbb{N}$ be such that $f(v)>d_{out}(v)$ for any $v$. Then the underlying graph of $D$ is $f$-choosable.
\end{lemma}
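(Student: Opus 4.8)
The plan is to induct on $|V|$, the number of vertices of $D$. The base case $V=\emptyset$ is vacuous; note also that $f(v)>d_{out}(v)\ge 0$ forces every list to be nonempty, so the induction can only terminate by running out of vertices.

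For the inductive step, suppose we are given color sets $\{A_v\}_{v\in V}$ with $|A_v|=f(v)$. I would fix any color $c$ occurring in at least one list and set $W=\{v\in V: c\in A_v\}$. By hypothesis the induced subdigraph $D[W]$ has a kernel $K\subseteq W$, that is, an independent set such that every vertex of $W\setminus K$ has an out-neighbor in $K$. Assign the color $c$ to every vertex of $K$; since $K$ is independent this is a proper partial coloring. Then delete $K$, obtaining $D'=D[V\setminus K]$, and for each remaining vertex $v$ put $A_v'=A_v\setminus\{c\}$ and $f'(v)=|A_v'|$.

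The crucial point — and the only place the hypotheses are genuinely used — is that $(D',f')$ still satisfies $f'(v)>d_{out}'(v)$ for every $v\in V\setminus K$, where $d_{out}'$ denotes outdegree in $D'$. If $v\notin W$ then $c\notin A_v$, so $f'(v)=f(v)$, while deleting vertices only decreases outdegree, giving $d_{out}'(v)\le d_{out}(v)<f(v)=f'(v)$. If $v\in W\setminus K$ then $f'(v)=f(v)-1$, but by the kernel property $v$ has an out-neighbor in $K$, so $d_{out}'(v)\le d_{out}(v)-1<f(v)-1=f'(v)$. Since every induced subgraph of $D'$ is also an induced subgraph of $D$, the digraph $D'$ again has the property that all its induced subgraphs have kernels, so the induction hypothesis applies to $(D',f')$ and produces a proper coloring $\{c_v\}_{v\in V\setminus K}$ with $c_v\in A_v'$. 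Combined with $c_v=c$ for $v\in K$, this colors the underlying graph of $D$, and it is proper: the only possible new conflict would be between a vertex of $K$ and an adjacent vertex $v\notin K$, but such a $v$ either lies outside $W$ (so $c\notin A_v$, hence $c_v\ne c$) or lies in $W\setminus K$ (so $c\notin A_v'$, hence $c_v\ne c$). This closes the induction.

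The argument is essentially routine once the setup is chosen well; the one step requiring care is the verification in the previous paragraph that deleting the kernel and removing the color $c$ preserves the strict inequality $f>d_{out}$, which is exactly where the definition of a kernel (every non-kernel vertex of $W$ has an outgoing arc into $K$) enters. I do not expect any further obstacle.
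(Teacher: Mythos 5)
Your proof is correct and complete; the paper itself does not reprove this lemma but defers to Galvin's Lemma 2.1 in \cite{G}, and your argument (pick a color $c$, take a kernel $K$ of the subdigraph induced on the vertices whose lists contain $c$, color $K$ with $c$, delete $K$ and remove $c$ from the remaining lists, and check that the kernel property compensates for the lost list element) is exactly that standard proof. No gaps.
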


An \emph{orientation} of a simple graph $G$ is any digraph whose underlying graph is $G$. A digraph is \emph{normal} if every clique in it has a kernel (which necessarily consists of one vertex). Note that it is sufficient to check that every triangle (a clique on three vertices) has a kernel to prove that a digraph is normal. A graph is \emph{solvable} if every normal orientation of it has a kernel.

\begin{lemma}\label{lem2}
Every induced subgraph of a solvable graph $G=(V,E)$ is solvable.
\end{lemma}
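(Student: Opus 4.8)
The plan is to take an induced subgraph $H = G[W]$ for some $W \subseteq V$, fix an arbitrary normal orientation $D_H$ of $H$, and extend it to a normal orientation of all of $G$ whose kernel, once restricted to $W$, yields a kernel of $D_H$. The point is that solvability of $G$ gives us a kernel of the extended orientation for free; the work is entirely in arranging the extension so that (i) it is normal, and (ii) its kernel meets $W$ in a kernel of $D_H$.

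First I would orient the edges inside $W$ exactly as in $D_H$. For the remaining edges I need a choice that forces any kernel $K$ of the extension to satisfy $K \cap W$ being a kernel of $D_H$. The natural device: orient \emph{every} edge with at least one endpoint outside $W$ so that it points \emph{into} $W$ whenever possible — i.e. for an edge $\{w,x\}$ with $w \in W$, $x \notin W$, set $x \to w$; and for edges with both endpoints outside $W$, orient them arbitrarily (say, pick any normal orientation of $G[V \setminus W]$ — or even just a transitive/acyclic orientation, which is automatically normal). With this choice, no vertex of $W$ has an out-neighbor outside $W$, so $N_{D}(w) \subseteq W$ for every $w \in W$; hence if $K$ is a kernel of $D$, then for each $w \in W \setminus K$ there is $u \in K$ with $w \to u$, and that $u$ lies in $W$. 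Thus $K \cap W$ dominates $W$ in $D_H$, and it is independent since $K$ is; so $K \cap W$ is a kernel of $D_H$, as required.

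The one thing to check carefully is that this extended orientation $D$ is normal, i.e. every triangle has a (one-vertex) kernel. A triangle of $G$ falls into cases by how many of its vertices lie in $W$. If all three lie in $W$, the triangle is oriented as in $D_H$, which is normal. If zero or one vertex lies in $W$, at least two vertices lie in $V \setminus W$, and the triangle's orientation is governed by the arbitrary normal (e.g. acyclic) orientation chosen there together with the "point into $W$" rule; I would verify that the sink of the acyclic part, or the $W$-vertex when it is a sink, serves as the kernel. The genuinely delicate case is a triangle with exactly two vertices $w_1, w_2 \in W$ and one vertex $x \notin W$: the edge $w_1 w_2$ is oriented by $D_H$, while $x \to w_1$ and $x \to w_2$. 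Since $D_H$ is normal, the triangle $w_1 w_2$ restricted — wait, that is just an edge — so say WLOG $w_1 \to w_2$ in $D_H$; then $w_2$ is dominated by $w_1$ and by nobody is $x$ dominating out of this triangle, but $\{w_2\}$ is a kernel: $w_1 \to w_2$ and $x \to w_2$. So every triangle has a kernel and $D$ is normal.

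I expect the main obstacle to be purely bookkeeping: making the case analysis for normality of $D$ airtight, especially ensuring that the "point into $W$" rule never creates a triangle with no sink (a directed $3$-cycle). Because every such problematic triangle must have at least two vertices in $W$ — the rule alone can never orient a triangle cyclically unless two of its edges lie inside $W$ — and inside $W$ we merely copied $D_H$, a directed $3$-cycle in $D$ would force one in $D_H$ or a configuration contradicting normality of $D_H$. Once that is nailed down, solvability of $G$ supplies a kernel $K$ of $D$, and the argument of the previous paragraph finishes the proof. I would also remark that the choice of an acyclic orientation on $G[V\setminus W]$ is the cleanest, since acyclic (equivalently transitive on each clique is not needed — merely acyclic suffices for a sink) orientations are trivially normal, sidestepping any appeal to solvability of the subgraph we are trying to prove solvable.
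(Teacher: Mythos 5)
Your proposal is correct and rests on the same key idea as the paper's proof: extend a normal orientation of the induced subgraph to a normal orientation of $G$ by directing every remaining arc into the subgraph, so that a kernel of the extension (which solvability of $G$ provides) restricts to a kernel of the original orientation. The paper trivializes the normality check by deleting one vertex at a time, so that $V\setminus W$ is a single vertex and no auxiliary orientation of $G[V\setminus W]$ is needed, but your one-shot version with an acyclic orientation outside $W$ and the triangle case analysis goes through as written.
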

\begin{proof}
It suffices to show that for any $v\in V$ induced subgraph $H$ on $V\setminus v$ is solvable. If we have a normal orientation $D$ of $H$, let us add a vertex $v$ and arcs $\{v\to u:\{v,u\}\in E\}$ to it to get a normal orientation $D'$ of $G$. Since $G$ is solvable, $D'$ has a kernel $K$. Then $K\setminus v$ is a kernel of $D$.
\end{proof}

A graph $G$ is \emph{perfect} if for any induced subgraph $H$ of $G$ the chromatic number $\chi(H)$ is equal to the size of a maximum clique of $H$. Maffray proved that for line graphs the property of being perfect coincides with the property of being solvable (\cite{M}, Theorem $1$).

\begin{theorem}[Maffray]\label{maffray}
The line graph of a multigraph is solvable if and only if it is perfect.
\end{theorem}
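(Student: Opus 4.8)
Since this is Maffray's theorem, one may of course simply cite \cite{M}; for completeness I indicate how a proof would go, as the structure of line perfect multigraphs underlies everything that follows. The backbone is the classical structural description: $L(G)$ is perfect if and only if $G$ has no odd cycle of length at least $5$ (as a subgraph), equivalently, every block of $G$ is a bipartite multigraph, a copy of $K_4$, or a triangular book $K_{1,1,n}$ (read appropriately when parallel edges are present). I would take this characterization as given, since it is purely combinatorial, and deduce the theorem from it in two directions.

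For the direction ``solvable $\Rightarrow$ perfect'' I would argue by contraposition. If $G$ is not line perfect then, by the characterization, $G$ contains an odd cycle $C = v_0 v_1 \cdots v_{2k} v_0$ with $k \ge 2$; since adjacency of edges is inherited from $G$ and $L(C_{2k+1}) = C_{2k+1}$, this gives an induced copy of $C_{2k+1}$ in $L(G)$. Orient that cycle cyclically, $v_i \to v_{i+1}$. As $k \ge 2$ there are no triangles, so every clique is a single edge and its head is a kernel; hence the orientation is normal. But it has no kernel: an independent set $K$ meeting every closed out-neighbourhood $\{v_i, v_{i+1}\}$ would yield an injection $v_i \mapsto v_{i+1}$ of $V \setminus K$ into $K$, which is impossible on an odd cycle. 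Thus an induced subgraph of $L(G)$ fails to be solvable, and by \autoref{lem2} (contrapositive) neither is $L(G)$.

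For ``perfect $\Rightarrow$ solvable'' I would fix a normal orientation $D$ of $L(G)$ and construct a kernel. First reduce to $G$ connected, since a union of kernels of the components $L(G_i)$ is a kernel. Next reduce to $G$ $2$-connected: if $x$ is a cut vertex of $G$, then the star $Q_x$ of edges at $x$ is a clique of $L(G)$, and in fact a clique cutset (a path in $L(G)$ between edges on different sides of $x$ must use an edge at $x$); and solvability is preserved under clique-sums, which I would isolate as a lemma --- given a normal orientation of a clique-sum of two solvable graphs, one chooses kernels of the two pieces that agree on the shared clique, using that $Q_x$ itself has a kernel under $D$, and glues them. Peeling off blocks this way leaves a $2$-connected line perfect $G$, which is bipartite, $K_4$, or $K_{1,1,n}$. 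If $G$ is bipartite, then $L(G)$ has no triangle, so every clique lies inside some $Q_v$; the requirement that every subclique of $Q_v$ have a kernel forces $D$ to order $Q_v$ transitively, and reading these linear orders as preference lists, a kernel of $D$ is exactly a stable matching of $G$, which exists by Gale--Shapley --- this is Galvin's argument \cite{G}. The remaining cases $G = K_4$ (so $L(G)$ is the octahedron) and $G = K_{1,1,n}$ are finite in type, and I would dispose of them by a direct case analysis over the few maximal cliques --- the stars at the two spine vertices and the triangles through the pages.

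The main difficulty is this last implication. Concretely: having the structural characterization available (which I would cite rather than reprove), carrying Galvin's stable-matching argument through on the bipartite blocks, and pinning down the clique-sum gluing lemma precisely enough that the reduction to $2$-connected pieces is valid. By contrast, ``solvable $\Rightarrow$ perfect'' is short once the characterization is granted.
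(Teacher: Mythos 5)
The paper contains no proof of this statement at all: it is imported as a black box from Theorem~1 of \cite{M}, so your opening move --- just cite Maffray --- is exactly what the paper does, and to that extent you match it. Of the sketch you add, the direction ``solvable $\Rightarrow$ perfect'' is complete and correct: an odd cycle $C$ of length $2k+1\ge 5$ in $G$ induces $C_{2k+1}$ in $L(G)$ (each vertex of $C$ meets exactly two edges of $C$, so non-consecutive edges are non-adjacent), the cyclic orientation is normal because there are no triangles, it has no kernel by your counting argument, and \autoref{lem2} finishes it.

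The converse direction, however, has a genuine gap at the clique-cutset gluing, which is precisely where the real content of Maffray's theorem sits. The assertion that ``one chooses kernels of the two pieces that agree on the shared clique'' is not a proof and does not work as stated: a kernel $K_1$ of $D$ restricted to one piece may be forced to contain a vertex $q_1$ of $Q_x$ while every kernel of the other piece contains a different $q_2$ of $Q_x$; and even if one of them avoids $Q_x$ entirely, the vertex of $Q_x$ used by the other side can still be adjacent to it, so the union need not be independent. Knowing that $Q_x$ itself has a kernel $\{q_0\}$ under $D$ gives you no handle for steering either piece's kernel toward $q_0$. The true lemma (kernel-solvability is preserved by clique cutsets) requires a careful inductive argument in which, after fixing a kernel of one piece, one passes to the induced subgraph of the other piece on the vertices not yet dominated or blocked and invokes hereditary solvability there, with the normality of $D$ on $Q_x$ doing actual work; this must be written out, not asserted. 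Two smaller overstatements: the blocks $K_4$ and $K_{1,1,n}$ are not ``finite in type,'' since $n$ and the edge multiplicities are unbounded, so ``a direct case analysis over the few maximal cliques'' is a placeholder for the several pages of argument that occupy most of \cite{M}; and in the bipartite case the stars $Q_v$ need not be ordered by a strict linear order when parallel edges give bidirected pairs, so the stable-matching argument has to tolerate ties. None of this affects the paper itself, which correctly treats the theorem as cited rather than proved.
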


In his paper Galvin uses the fact that line graphs of bipartite graphs are perfect. For the line graph of an arbitrary bipartite graph $G$ he finds such an orientation $D$ that all outdegrees in it are smaller than $\chi'(G)$. Since $L(G)$ is perfect, by \autoref{maffray} it is solvable, then by \autoref{lem2} every induced subgraph of $D$ has a kernel. Then by \autoref{lem1} $L(G)$ is $\chi'(G)$-choosable, which proves the LCC for bipartite graphs.

\begin{theorem}[Galvin]
For an arbitrary bipartite multigraph $G$, $\chi'(G)=\chi_l'(G)$.
\end{theorem}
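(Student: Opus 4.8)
The plan is to follow exactly the route sketched just before the statement: build a \emph{normal} orientation $D$ of $L(G)$ all of whose outdegrees are strictly below $\chi'(G)$, check that $L(G)$ is perfect so that \autoref{maffray} and \autoref{lem2} apply, and then feed everything into \autoref{lem1}.

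First I would fix the data. Since $G$ is bipartite, König's edge-colouring theorem gives $\chi'(G)=\Delta(G)=:\Delta$, so there is a proper edge-colouring $c\colon E(G)\to\{1,\dots,\Delta\}$; fix also a bipartition $V(G)=X\cup Y$. Now orient $L(G)$ as follows: regard each edge of $G$ as directed from its $X$-endpoint to its $Y$-endpoint, and for two edges $e,f$ of $G$ meeting at a vertex $v$ put $e\to f$ in $D$ exactly when either $v\in X$ and $c(e)<c(f)$, or $v\in Y$ and $c(e)>c(f)$ (if $e$ and $f$ are parallel, apply the $X$-endpoint rule — any fixed tie-break is fine). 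For a fixed edge $e=xy$ with colour $k$, its out-neighbours at $x$ are edges coloured from $\{k+1,\dots,\Delta\}$, of which there are at most $\Delta-k$, and its out-neighbours at $y$ are edges coloured from $\{1,\dots,k-1\}$, of which there are at most $k-1$; hence $d_{out}(e)\le(\Delta-k)+(k-1)=\Delta-1<\chi'(G)$. This counting — Galvin's observation that ordering the edges by colour at each endpoint caps the outdegree at $\Delta-1$ independently of $c(e)$ — is the heart of the argument, and the step I expect to be the only genuinely nontrivial one; the rest is bookkeeping.

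Next I would check normality and perfection. As $G$ is triangle-free, every clique of $L(G)$ (in particular every triangle) consists of edges through one common vertex $v$, and on such a clique the rule above orients $D$ as the transitive tournament ordered by colour (reversed if $v\in Y$); a transitive tournament has a kernel, namely its sink, so $D$ is normal, and normality is inherited by every induced sub-digraph. For perfection: every induced subgraph of $L(G)$ is $L(G')$ for a subgraph $G'$ of $G$ obtained by deleting edges, where $\chi(L(G'))=\chi'(G')=\Delta(G')$ by König while the maximum clique of $L(G')$ also has size $\Delta(G')$ since $G'$ is triangle-free; thus $\chi=\omega$ on all induced subgraphs, so $L(G)$ is perfect.

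Finally I would assemble the pieces. By \autoref{maffray}, $L(G)$ is solvable; by \autoref{lem2}, so is every induced subgraph of $L(G)$; and since $D$ and each of its induced sub-digraphs are normal orientations of these solvable graphs, each has a kernel. Hence $D$ meets the hypotheses of \autoref{lem1} with the constant function $f\equiv\chi'(G)$, because $f(e)=\chi'(G)>\Delta-1\ge d_{out}(e)$ for every $e$. Therefore $L(G)$ is $\chi'(G)$-choosable, i.e. $\chi_l'(G)\le\chi'(G)$, which together with the trivial $\chi'(G)\le\chi_l'(G)$ yields equality. The only places needing care are the multigraph details (parallel edges in the orientation, König for multigraphs) and confirming that triangles of $L(G)$ are always stars, which is what makes the constructed orientation normal.
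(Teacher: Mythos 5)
Your route is exactly the one the paper sketches before the statement (and carries out in Section 3): orient $L(G)$ by comparing colours of a proper edge-colouring, increasing at $X$-vertices and decreasing at $Y$-vertices, bound every outdegree by $\Delta-1$, establish perfection of $L(G)$, and chain \autoref{maffray}, \autoref{lem2} and \autoref{lem1}. The outdegree count, the identification of cliques of $L(G)$ with stars, and the perfection argument are all fine.

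The one genuine problem is your treatment of parallel edges. You resolve the conflict between the $X$-rule and the $Y$-rule for a parallel pair by keeping only the arc given by the $X$-endpoint, asserting that any fixed tie-break is fine. It is not: this choice destroys normality. Take parallel edges $e,f$ with endpoints $x\in X$, $y\in Y$, and a third edge $g$ incident only to $y$, with $c(e)<c(g)<c(f)$. Then $\{e,f,g\}$ is a clique of $L(G)$, and your orientation gives $e\to f$ (the $X$-tie-break) together with $f\to g$ and $g\to e$ (the $Y$-rule), i.e.\ a directed $3$-cycle. This clique has no kernel, so $D$ is not normal, and the induced subdigraph on $\{e,f,g\}$ itself has no kernel, so the hypothesis of \autoref{lem1} fails outright. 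No single-arc tie-break can repair this: whichever of $e\to f$ or $f\to e$ you keep, the analogous cyclic triangle appears in a clique through $x$ or through $y$. The correct resolution --- the one the paper adopts and explicitly remarks on --- is to keep \emph{both} arcs between any two parallel edges (the paper's notion of digraph permits a pair of opposite arcs). With both arcs present, every clique $K\subseteq E(w)$ again has a kernel, namely its extreme-colour edge, which now receives an arc from each other member including its parallel mates; and the outdegree bound survives, since the out-neighbours of $e$ still carry pairwise distinct colours different from $c(e)$, whether the arc to them arises at the $X$-end, the $Y$-end, or both. Apart from this point, your argument coincides with the paper's.
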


It turns out that the LCC holds for any graph whose line graph is perfect. Such graphs are called \emph{line perfect}.

\begin{theorem}\label{main}
For an arbitrary line perfect multigraph $G$, $\chi'(G)=\chi_l'(G)$.
\end{theorem}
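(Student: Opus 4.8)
The plan is to mimic Galvin's argument and isolate the one thing that actually needs work. First I would note that it suffices to produce a single \emph{normal} orientation $D$ of $L(G)$ in which every vertex has outdegree strictly less than $\chi'(G)$. Indeed $L(G)$ is perfect by hypothesis, hence solvable by \autoref{maffray}; by \autoref{lem2} every induced subgraph of $L(G)$ is solvable, and the restriction of $D$ to such a subgraph is still a normal orientation (a triangle of the restriction is a triangle of $D$, so it has a kernel), hence has a kernel; so \autoref{lem1} with the constant bound $f\equiv\chi'(G)$ gives that $L(G)$ is $\chi'(G)$-choosable, i.e. $\chi_l'(G)\le\chi'(G)$, while the opposite inequality is trivial. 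It is convenient to restate the target on $G$ itself: fix a proper $\chi'(G)$-edge-colouring $\phi$ of $G$; then I want a linear order $<_w$ of the edges at $w$, for every vertex $w$, so that (a) for each edge $e=\{u,v\}$ the number of edges at $u$ below $e$ in $<_u$ plus the number at $v$ below $e$ in $<_v$ is at most $\chi'(G)-1$ --- this bounds the outdegree of $e$ in the induced orientation of $L(G)$ --- and (b) for each triangle of $G$, one of its three edges is below the two other triangle-edges at both of its endpoints, so that the corresponding triangle of $L(G)$ has a kernel; since it is enough to test triangles, (b) makes $D$ normal. (The orders on parallel classes must be arranged coherently so that cliques of parallel edges and the triangles meeting them are also normalized; this is a local matter, handled as in Galvin's treatment of multigraphs in \cite{G}.) When $G$ is triangle-free, (b) is vacuous and ordering the edges at each vertex by colour --- ascending on one side of the bipartition and descending on the other --- gives (a) via the bound $(\phi(e)-1)+(\chi'(G)-\phi(e))$; this is exactly Galvin's construction.

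For general line perfect $G$ I would invoke Maffray's structural characterisation (\cite{M}, Theorem 1): up to parallel edges, every block of $G$ is either bipartite, or $K_4$, or a triangular book (several triangles sharing one common edge). Bipartite blocks are handled by Galvin's construction above. For a $K_4$-block or a book-block I would write down an explicit proper $\chi'$-edge-colouring of the block together with explicit local orders and verify (a) and (b) by a direct, finite check parametrised only by the edge multiplicities; what makes this painless is that such a block has at most two ``dense'' vertices (the spine, in a book; and in $K_4$ everything is bounded), while every other vertex has degree $2$ and so contributes at most $1$ to any outdegree.

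The real difficulty --- and what I expect to be the crux --- is assembling the per-block data into orders on all of $G$ along the block--cut tree. Note that $\chi'(G)$ is \emph{not} the maximum of $\chi'$ over the blocks (two triangles sharing a vertex has $\chi'=4$ although each block has $\chi'=3$), so the block colourings must be rescaled onto one common palette of $\chi'(G)$ colours and the local orders must be reconciled at every cut vertex while keeping the outdegree-sum bound (a) across that vertex; condition (b) is less troublesome because every triangle lives inside a single block. I would do this by induction on the number of blocks: peel off a leaf block $B$ (or the union of the leaf blocks hanging at a single cut vertex $v$), apply the inductive hypothesis to the rest $G'$, and extend the colouring and orders to $B$. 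At $v$ the edges of $B$ at $v$ together with the edges of $G'$ at $v$ number at most $\chi'(G)$, so the two local orders at $v$ can be interleaved; the slack in the bipartite/small-block analysis of $B$ is exactly what lets one choose this interleaving, together with the internal colouring and orders of $B$, so that no edge reaches outdegree $\chi'(G)$. Making Galvin's ascending/descending scheme coexist at a cut vertex with the more rigid orders forced by a $K_4$- or book-block is where the bookkeeping is genuinely delicate; everything else is either Galvin's proof essentially verbatim or a bounded verification.
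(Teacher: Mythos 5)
There is a genuine gap, and it is located exactly where you placed your confidence in a ``direct, finite check'': no normal orientation of $L(K_4)$ with all outdegrees less than $\chi'(K_4)=3$ exists, so the plan of proving the whole theorem through \autoref{lem1} with the bound $f\equiv\chi'(G)$ breaks already on the block $G=K_4$. Here is the obstruction. $L(K_4)$ is the octahedron: $6$ vertices, $12$ edges, each vertex of degree $4$, and $8$ triangles ($4$ coming from the stars of $K_4$ and $4$ from its triangles). If every outdegree is at most $2$, the total number of arcs is at most $12$; since each of the $12$ edges of the octahedron carries at least one arc, there are exactly $12$ arcs, no pair of opposite arcs, and every vertex has outdegree and indegree exactly $2$. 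A triangle has a kernel only if some vertex $u$ of it receives arcs from both other vertices, i.e.\ the other two vertices form an edge inside the in-neighbourhood of $u$; but the in-neighbourhood of $u$ has size $2$ and lies on the $4$-cycle of neighbours of $u$, so it spans at most one edge, and $u$ can serve as the kernel of at most one triangle. Six vertices can therefore supply kernels for at most six of the eight triangles. So conditions (a) and (b) cannot be met simultaneously for $K_4$, with or without your linear-order formulation, and the same difficulty cannot be waved away for the books $K_{1,1,n}$ either. This is precisely why the paper (and Peterson--Woodall before it) uses the kernel method \emph{only} for bipartite blocks, and switches for $K_4$ and $K_{1,1,n}$ to a completely different mechanism: an induction on the number of edges that repeatedly colours a ``reducing set'' (two non-adjacent edges sharing a list colour) and strips that colour from all lists, until the lists of non-adjacent edges are disjoint, at which point Hall's theorem (\autoref{hall}) finishes the job (\autoref{thk4}, \autoref{thma}, \autoref{thmv}).

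Your assembly step along the block--cut tree is also organised differently from the paper's, and the paper's version is worth adopting because it sidesteps the ``delicate bookkeeping'' you anticipate. Rather than building one global orientation and reconciling orders at cut vertices, the paper colours the blocks one at a time in a depth-first order; when a block $H$ is reached through a cut vertex $v$, the colours already used on $E(v)\setminus F(v)$ are deleted from the lists of the edges of $H$ at $v$, and the inequality $|A_e\setminus C|\ge \chi'(G)-(d_G(v)-d_H(v))\ge d_H(v)$ shows that each block only needs to be $f_v$-edge-choosable for the function that equals $d_H(v)$ on $E(v)$ and $\chi'(H)$ elsewhere (\autoref{main2}). That strengthened per-block statement is the correct thing to prove (and is what the bipartite, $K_4$ and $K_{1,1,n}$ sections actually establish); your global-orientation interleaving would in any case have to reprove something equivalent at every cut vertex.
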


The remainder of the text is devoted to the proof of this theorem.

If $G=(V,E)$, we denote $E(v)=\{e\in E: e \text{ is incident to } v\}$, $d_G(v)=|E(v)|$ for any $v\in V$. We also denote $E(a,b)=E(a)\cap E(b)$, $E(a,b,c)=E(a,b)\cup E(a,c)\cup E(b,c)$ for any distinct $a,b,c\in V$. $E(a,b)$ is the set of all edges between $a$ and $b$, and $E(a,b,c)$ is the set of all edges in the triangle on vertices $a$, $b$, $c$.

\begin{remark}\label{chidef}
For any line perfect multigraph $G$, $\chi'(G)$ is the size of the maximum clique in $L(G)$, so
$$
\chi'(G)=\max\left(\max_{v\in V}(d_G(v)),\max_{a,b,c\in V}(|E(a,b,c)|)\right).
$$
\end{remark}

Let $G=(V,E)$ be an arbitrary line perfect graph. In order to prove that $\chi'(G)=\chi_l'(G)$, we first fix arbitrary sets of colors $\{A_e: |A_e|=\chi'(G)\}_{e\in E}$. Then we choose the order of traversal of a block-cut tree of $G$ by running the depth-first search algorithm, starting from any block. We color biconnected components of $G$ in that order, one after the other.

When we want to color edges of a block $H=(W,F)$, there is at most one vertex $v\in W$ which has some of its incident edges in $E(v)\setminus F(v)$ already colored. If this vertex exists, it is a cut vertex shared with some other block (or blocks) which we have already colored. Let 
$$
C=\{c: \exists\ e\in E(v)\setminus F(v), \text{ color } c \text{ is already assigned to edge } e\}.
$$
For any $e\in F(v)$ we cannot use colors from $C$, so we have to replace $A_e$ with $A_e\setminus C$. Note that $|A_e\setminus C|\geq \chi'(G)-(d_G(v)-d_H(v))\geq d_H(v)$, since $\chi'(G)\geq d_G(v)$. Also, obviously, $\chi'(G)\geq \chi'(H)$.

Given what is stated above, to show that edges of $H$ can be properly colored, it suffices to show that for any $v\in W$, $H$ is $f_v$-edge-choosable, where
$$
f_v(e)= 
\begin{cases}
d_H(v), & \text{if } e \in F(v),  \\
\chi'(H), & \text{otherwise}.
\end{cases}
$$

As we can see, \autoref{main} is equivalent to the following theorem:

\begin{theorem}\label{main2}
Any biconnected line perfect multigraph $G=(V,E)$ is $f_v$-edge-choosable for any $v\in V$, where
\begin{equation*}
f_v(e)= 
\begin{cases}
d_G(v), & \text{if } e \in E(v),  \\
\chi'(G), & \text{otherwise}.
\end{cases}
\end{equation*}
\end{theorem}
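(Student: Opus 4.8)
The plan is to reduce $f_v$-edge-choosability to the kernel method of Lemma~\ref{lem1} applied to $L(G)$. By Maffray's theorem (\autoref{maffray}), since $G$ is line perfect, $L(G)$ is solvable, and by \autoref{lem2} every induced subgraph of any normal orientation of $L(G)$ has a kernel. So it suffices to produce a normal orientation $D$ of $L(G)$ such that $d_{out}(e) < f_v(e)$ for every edge $e$ — that is, $d_{out}(e) \le d_G(v)-1$ for $e \in E(v)$ and $d_{out}(e) \le \chi'(G)-1$ for all other $e$. Then \autoref{lem1} gives exactly the desired choosability.

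First I would recall (from \autoref{chidef}) that $\chi'(G)$ is the clique number of $L(G)$, and that cliques in $L(G)$ come in two flavours: a "star" clique $E(u)$ at a vertex $u$, or a "triangle" clique $E(a,b,c)$. Any orientation of $L(G)$ restricted to a star clique $E(u)$ must have some edge with outdegree $\ge |E(u)|-1 \ge d_G(u)-1$ inside that clique, so to keep outdegrees below $\chi'(G)$ we have essentially no slack except at the vertices realizing the maximum. The construction I would aim for: orient $L(G)$ so that within each star $E(u)$ the arcs reflect a linear order on the edges at $u$, chosen so that the "sink" of each star is pushed toward $v$ — concretely, fix a spanning structure (the block is biconnected, so there is an ear decomposition or an $st$-orientation of $G$ with $v$ as source) and use it to orient each $E(u)$ consistently, so that the edge of $E(u)$ that is "earliest" toward $v$ gets the high outdegree and every edge incident to $v$ is a sink of its star, giving it outdegree within its own star equal to $0$. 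One must then also handle the triangle cliques $E(a,b,c)$ — parallel edges and triangles interact, and normality must be checked on every triangle of $L(G)$, i.e. on every three mutually adjacent edges of $G$. The outdegree bound inside a triangle clique of size $\chi'(G)$ again forces the orientation there to be (the restriction of) a transitive tournament with its source avoided, so the global orientation must simultaneously be "transitive-like" on every maximal clique; consistency of these local linear orders is what needs an argument, presumably via an $st$-numbering of $G$ together with a tie-breaking rule among parallel edges and among the three vertices of a triangle.

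Concretely the steps are: (1) set up an orientation of $G$ itself — e.g. an acyclic $st$-orientation rooted at $v$, which exists because $G$ is biconnected — and a total order on $V$ refining it; (2) lift this to an orientation $D$ of $L(G)$ by declaring, for adjacent edges $e=\{a,b\}$ and $e'=\{a,c\}$ sharing vertex $a$, that $e\to e'$ or $e'\to e$ according to a rule built from the order on $\{b,c\}$ (with an auxiliary order to break ties when $b=c$, i.e. parallel edges), arranged so that edges incident to $v$ are always sinks in their stars; (3) verify $D$ is normal by checking each triangle of $L(G)$ has a one-vertex kernel — this splits into the cases "three edges at a common vertex", "edges of a triangle $abc$", and mixed cases with multiplicities, each reducing to the chosen linear order being consistent; (4) bound $d_{out}(e)$: an edge $e=\{a,b\}$ has out-neighbours only among $E(a)\cup E(b)$, and by the construction its out-neighbours in $E(a)$ are those "later" than $e$ and similarly in $E(b)$, so $d_{out}(e) \le (d_G(a)-1)+(d_G(b)-1) - (\text{overlap from the triangle/parallel structure})$, and the orientation rule is tuned so this is $\le \chi'(G)-1$ in general and $\le d_G(v)-1$ when $a=v$ or $b=v$; (5) invoke \autoref{maffray}, \autoref{lem2}, \autoref{lem1} to conclude.

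The main obstacle I expect is step (2)–(4) done simultaneously: finding a single orientation rule for $L(G)$ that is at once normal (a local condition on every triangle, sensitive to parallel edges and to triangles of $G$) and has every outdegree strictly below the relevant bound (a condition that is tight on every maximum clique of $L(G)$, of which there may be many overlapping ones — a big star and several triangles hanging off it). In bipartite graphs Galvin had no triangles to worry about and could simply orient by a proper edge-colouring; here the presence of triangles $E(a,b,c)$ with $|E(a,b,c)| = \chi'(G)$ means the proper-edge-colouring orientation need not be normal, so the construction must be genuinely new — most likely one first fixes an optimal edge colouring of $G$ with $\chi'(G)$ colours (which exists by line perfectness / \autoref{chidef} — note $\chi'(G)$ equals the max degree or the max triangle size) and then perturbs the induced orientation locally around triangles and parallel classes, proving that the perturbation preserves acyclicity on each clique while not raising any outdegree past the bound. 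Making that perturbation well-defined and checking it globally is the crux.
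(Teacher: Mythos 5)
There is a genuine gap here, and in fact the strategy you propose provably cannot work for all the graphs covered by the statement. Your entire plan rests on producing a normal orientation $D$ of $L(G)$ with $d_{out}(e)<f_v(e)$ for every edge, so that \autoref{lem1} applies. For $G=K_4$ this is impossible. Here $\chi'(K_4)=3$ and $d_G(v)=3$, so you would need a normal orientation of $L(K_4)\cong K_{2,2,2}$ in which every outdegree is at most $2$. But $L(K_4)$ has $6$ vertices, $12$ edges and $8$ triangles (the $4$ stars $E(u)$ and the $4$ triangles $E(a,b,c)$ of $K_4$). If all six outdegrees are at most $2$, the total number of arcs is at most $12$, hence there are no pairs of opposite arcs, every outdegree is exactly $2$, and (since every vertex of $K_{2,2,2}$ has degree $4$) every indegree is exactly $2$. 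A vertex $u$ can then be the kernel of at most one triangle, namely the one spanned by $u$ and its two in-neighbours; so at most $6$ of the $8$ triangles can have kernels, and $D$ cannot be normal. Thus the kernel method alone cannot establish $3$-edge-choosability of $K_4$, your steps (2)--(4) have no solution there, and this is precisely why the paper says that ``some other techniques'' besides Galvin's method are needed. (Independently of this obstruction, your write-up never actually constructs the orientation: you list the properties it should have and acknowledge that making the perturbation well-defined is the unresolved crux, so what you have is a plan rather than a proof.)

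The paper proceeds quite differently. It first invokes Maffray's structure theorem (\autoref{structure}): a block of a line perfect multigraph is bipartite, a $K_4$, or a $K_{1,1,n}$ (with multiple edges allowed). Only for bipartite blocks does it use the kernel method, essentially Galvin's orientation coming from a proper edge colouring. For $K_4$ and $K_{1,1,n}$ it argues directly by induction on the number of edges: whenever two non-adjacent edges share an available colour (a \emph{reducing set}), it assigns that colour to both, deletes them, and verifies that the shrunken lists still dominate the new target function; when no such pair exists, lists of non-adjacent edges are pairwise disjoint, a proper colouring is a system of distinct representatives, and Hall's theorem (\autoref{hall}) finishes the argument. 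If you want to salvage your approach, you would at minimum have to restrict the kernel construction to the bipartite blocks and supply a genuinely different argument for the two triangle-containing block types.
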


You can find a thorough proof of the next theorem in \cite{M} (see Theorem $2$).

\begin{theorem}\label{structure}
Every biconnected component (block) of a line perfect multigraph is either a bipartite graph, $K_4$ (a clique on four vertices), or $K_{1,1,n}$ (a graph consisting of $n+2$ vertices $v_1,\dots,v_n,a,b$ such that $\{v_1,\dots,v_n\}$ is an independent set, and $\{v_i,a,b\}$ is a clique for $i=1,\dots,n$).
\end{theorem}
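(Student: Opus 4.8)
The plan is to derive the classification from a single, elementary consequence of perfection rather than from heavy perfect-graph machinery. First I would record the only fact I actually need: if $G$ contains an odd cycle $C$ of length $2k+1\ge 5$, then the $2k+1$ edges of $C$ induce a copy of $C_{2k+1}$ in $L(G)$, because two edges of a cycle share an endpoint precisely when they are consecutive on it. An induced $C_{2k+1}$ with $k\ge 2$ is an odd hole, so $L(G)$ is not perfect. Hence a line perfect multigraph contains no odd cycle of length $\ge 5$. Since two parallel edges can never both lie on one cycle, every such cycle lives in the underlying simple graph, and all three target families (bipartite, $K_4$, $K_{1,1,n}$) are unaffected by adding parallel edges. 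So I would fix a block $B$, pass to its underlying simple graph, and prove the classification there, with multiplicities immaterial.

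Thus let $B$ be a $2$-connected simple graph with no odd cycle of length $\ge 5$. If $B$ is bipartite we are done, so assume it has an odd cycle; a shortest odd cycle is chordless, hence of length $3$, giving a triangle $T=\{a,b,c\}$. The heart of the argument, and the step I expect to be the \emph{main obstacle}, is the claim that every vertex $d\notin T$ is adjacent to at least two vertices of $T$. To prove it I would invoke the fan version of Menger's theorem: $2$-connectivity yields two internally disjoint paths $P_1,P_2$ from $d$ to distinct vertices of $T$, say $a$ and $b$, meeting $T$ only at those endpoints. Closing $P_1\cup P_2$ through the edge $ab$ gives a cycle of length $|P_1|+|P_2|+1$, and closing it through the path $a\text{-}c\text{-}b$ gives one of length $|P_1|+|P_2|+2$; both are genuine cycles, since $c$ avoids the internal vertices of the paths. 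One of these lengths is odd, and being at least $3$ it must equal exactly $3$ by the forbidden-cycle property; a short parity check on $|P_1|+|P_2|$ then forces $|P_1|=|P_2|=1$, so $d$ is adjacent to $a$ and $b$.

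With this claim the classification follows by hunting for five-cycles. If some vertex $d$ is adjacent to all of $a,b,c$, then $\{a,b,c,d\}$ spans a $K_4$; any further vertex $e$ is adjacent to two of $a,b,c$, and in each case the remaining two $K_4$-vertices complete a $C_5$ (for instance $e\text{-}a\text{-}c\text{-}d\text{-}b\text{-}e$ when $e\sim a,b$), so $B=K_4$. Otherwise every outside vertex is adjacent to exactly two triangle vertices. If two of them attached to overlapping pairs, say $d$ to $\{a,b\}$ and $e$ to $\{a,c\}$, then $d\text{-}b\text{-}c\text{-}e\text{-}a\text{-}d$ is a $C_5$; hence all outside vertices attach to one common pair, which I take to be $\{a,b\}$, and $c$ itself then has neighborhood exactly $\{a,b\}$ and joins the non-apex vertices. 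Finally, an edge between two non-apex vertices $v_i,v_j$ would complete a $K_4$ on $\{a,b,v_i,v_j\}$ and, via a $C_5$ through any third non-apex vertex, is impossible unless $B$ has only these four vertices; so the non-apex set is independent and $B=K_{1,1,n}$ with apexes $a,b$.

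The routine part is checking that each displayed five-cycle really has distinct vertices and that the parity bookkeeping in the triangle-neighborhood claim is complete. The conceptual obstacle is exactly that claim, where $2$-connectivity (through the fan lemma) must be combined with the parity of cycle lengths; once it is established, the rest is a finite sequence of "produce a forbidden $C_5$" arguments sorting $B$ into $K_4$ or $K_{1,1,n}$.
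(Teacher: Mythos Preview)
The paper does not prove this theorem at all; it simply cites Maffray~\cite{M} (Theorem~2) for the proof. So there is no ``paper's proof'' to compare against in any substantive sense.

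Your argument is correct and is essentially the standard route to this structure theorem: first observe that a $C_{2k+1}$ subgraph of $G$ (not necessarily induced) yields an \emph{induced} $C_{2k+1}$ in $L(G)$, so line perfection forbids odd cycles of length $\ge 5$; then analyse a non-bipartite $2$-connected block via the fan lemma and a handful of forced $C_5$'s. Two cosmetic points worth tightening. First, when you pass to the underlying simple graph you should remark that $2$-connectivity is preserved (cut vertices depend only on adjacency, not multiplicity), and that blocks on at most two vertices are trivially bipartite. Second, your caveat ``unless $B$ has only these four vertices'' in the final paragraph is actually vacuous in that case: you are already assuming every outside vertex is adjacent to exactly $\{a,b\}$ in $T$, so no outside vertex is adjacent to $c$, and hence $c$ is always a third non-apex vertex forcing the $C_5$. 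With these small additions your proof is complete and self-contained, which is more than the paper itself provides here.
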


\begin{remark}
Note that in each case multiple edges are allowed.
\end{remark}

\begin{remark}
Third case is essentially a set of triangles with a common edge, where edges are allowed to be multiple.
\end{remark}

All that is left to do is to prove \autoref{main2} for all three types of blocks described in \autoref{structure}.

For bipartite blocks we build the orientation of the line graph with all outdegrees less than corresponding values of $f_v$, so \autoref{lem1} can be used again. For $K_4$ and $K_{1,1,n}$ we prove $f_v$-edge-choosability directly.

\section{Bipartite blocks}

The orientation we build is essentially the same as the one Galvin used in his proof (see Theorem $4.1$ in \cite{G}).

\begin{theorem} 
Let $G=(V,E)$ be a bipartite multigraph. For any $v\in V$, $G$ is $f_v$-edge-choosable, where
$$
f_v(e)= 
\begin{cases}
d_G(v), & \text{if } e \in E(v),  \\
\chi'(G), & \text{otherwise}.
\end{cases}
$$
\end{theorem}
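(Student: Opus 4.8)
\emph{Proof outline.} The plan is to produce an orientation $D$ of the line graph $L(G)$ in which every outdegree is strictly below the corresponding value of $f_v$, and then feed it into the kernel machinery. Since a bipartite multigraph has no odd cycle it is line perfect, so $L(G)$ is perfect; then \autoref{maffray} makes $L(G)$ solvable, and \autoref{lem2} makes every induced subgraph of $L(G)$ solvable. Provided the orientation $D$ is moreover \emph{normal}, each of its induced subdigraphs $D[S]$ is again normal (cliques of $D[S]$ are cliques of $D$, carrying the same arcs) and is an orientation of the solvable graph $L(G)[S]$, hence has a kernel. Thus $D$ is a digraph in which every induced subgraph has a kernel, so \autoref{lem1} applies with $f=f_v$ and yields that $L(G)$ — the underlying graph of $D$ — is $f_v$-choosable; that is, $G$ is $f_v$-edge-choosable.

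To build $D$, fix the bipartition $V=X\cup Y$ so that $v\in Y$ (exchanging the two sides if necessary), and take any proper edge colouring $\phi:E\to\{1,\dots,\chi'(G)\}$, which exists because $\chi'(G)$ is by definition the edge chromatic number. Since $d_G(v)\le\chi'(G)$, after composing $\phi$ with a suitable permutation of the colour set we may assume that the $d_G(v)$ edges incident to $v$ carry precisely the colours $1,\dots,d_G(v)$. Now orient $L(G)$ as follows: for two adjacent edges $e,e'$ of $G$, if their $Y$-endpoints coincide put $e\to e'$ when $\phi(e)<\phi(e')$, and if their $Y$-endpoints differ (so that, being adjacent, their $X$-endpoints coincide) put $e\to e'$ when $\phi(e)>\phi(e')$. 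Parallel edges fall under the first rule. Because $\phi$ is proper, adjacent edges always get distinct colours, so exactly one arc is placed between each pair of adjacent vertices of $L(G)$ and $D$ is a genuine orientation.

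Next I would check the outdegree condition. Let $e=\{x,y\}$ with $x\in X$ and $y\in Y$. The out-neighbours of $e$ whose $Y$-endpoint is $y$ (this set also contains any edges parallel to $e$) are exactly the edges at $y$, other than $e$, of strictly larger colour than $\phi(e)$, so there are at most $\chi'(G)-\phi(e)$ of them; the remaining out-neighbours are among the edges at $x$ of strictly smaller colour than $\phi(e)$, at most $\phi(e)-1$ of them. Hence $d_{out}(e)\le\chi'(G)-1<\chi'(G)$. If moreover $y=v$ (equivalently $e\in E(v)$), the colours at $v$ are precisely $1,\dots,d_G(v)$, so the first set has exactly $d_G(v)-\phi(e)$ elements and $d_{out}(e)\le(d_G(v)-\phi(e))+(\phi(e)-1)=d_G(v)-1<d_G(v)$. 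In every case $d_{out}(e)<f_v(e)$.

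Finally I would confirm that $D$ is normal, for which it suffices to examine the triangles of $L(G)$. Since $G$ is bipartite it has no triangle, so any three pairwise adjacent edges of $G$ must pass through a common vertex $w$; listed in increasing order of colour they are oriented — by whichever of the two rules is keyed to $w$ — as a transitive tournament, which always has a kernel, namely its sink. So $D$ is normal, and the first paragraph then finishes the argument. The step I expect to require the most care is the outdegree bookkeeping at $v$: one must ensure that the two partial counts there collapse to $d_G(v)-1$ and not merely to $\chi'(G)-1$, and this is exactly why $v$ is placed on the $Y$-side and why edges parallel to an edge at $v$ are governed by the $Y$-endpoint rule — so that the colours available at $v$ are squeezed into $\{1,\dots,d_G(v)\}$ and the $\phi(e)$ terms cancel.
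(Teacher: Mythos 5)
Your construction diverges from the paper's in one detail that turns out to be fatal: between two parallel edges you place only \emph{one} arc, governed by the $Y$-endpoint rule. With that choice the digraph $D$ need not be normal, so the kernel machinery (\autoref{maffray}, \autoref{lem2}, \autoref{lem1}) cannot be invoked. Concretely, take $x\in X$, parallel edges $e_1,e_2$ between $x$ and some $y\in Y$, a third edge $e_3$ from $x$ to $y'\neq y$, and a proper colouring with $\phi(e_1)=1$, $\phi(e_3)=2$, $\phi(e_2)=3$ (this is compatible with your normalization at $v$ provided $v\notin\{y,y'\}$, e.g.\ attach a pendant edge of colour $1$ at a new vertex $v\in Y$). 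The pair $e_1,e_2$ has coinciding $Y$-endpoints, so your first rule gives $e_1\to e_2$; the pairs $e_1,e_3$ and $e_2,e_3$ have distinct $Y$-endpoints, so your second rule gives $e_3\to e_1$ and $e_2\to e_3$. The clique $\{e_1,e_2,e_3\}$ of $L(G)$ is then a directed $3$-cycle with no kernel. Your normality argument asserts that a triangle of $L(G)$ through a common vertex $w$ is oriented ``by whichever of the two rules is keyed to $w$''; that is exactly what fails here, since the arc inside the parallel pair is keyed to $y$ while the triangle's common vertex is $x$.

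The outdegree bookkeeping, which you flagged as the delicate point, is in fact fine; the repair is needed on the normality side. The paper's remedy is to put \emph{both} arcs between any two parallel edges (each of the two rules contributes one direction, and the definition of a digraph here permits a pair of opposite arcs over a single edge of the underlying graph). Then every clique of $L(G)$ lies in some $E(w)$, and the edge of extreme colour at $w$ receives an arc from every other member of the clique via the rule keyed to $w$, so it is a kernel of that clique regardless of any arcs it sends back to its parallel mates. The outdegree bounds survive unchanged, because the colours occurring on the closed out-neighbourhood of an edge remain pairwise distinct (out-neighbours through one endpoint have strictly larger colour, those through the other strictly smaller). With this single modification your argument coincides with the paper's proof.
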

\begin{proof}
Let $(X,Y)$ be a bipartition of $G$ such that $v\in X$. Let $E(v)=\{e_1,\dots,e_{d_G(v)}\}$. Let $c:E\to\{1,\dots,\chi'(G)\}$ be a proper coloring of $L(G)$. We can assume that $c(e_i)=i$ (it can be achieved by a suitable permutation of colors).

We define the orientation $D$ of $L(G)$ as following: for adjacent edges $e,q\in E(w)$ for some $w\in V$, $e\to q$ if either $w\in X$ and $c(e)<c(q)$, or else $w\in Y$ and $c(e)>c(q)$. Note that for any two multiple edges $e$, $q$ both $e\to q$ and $q\to e$ are true.

For each $e\in E$, $d_{out}(e)<\chi'(G)$, because $c$ is one-on-one on the closed neighborhood $N(e)$ of $e$ in $D$. Additionally, for $e\in E(v)$, $d_{out}(e)<d_G(v)$, because $c$ is still one-on-one on $N(e)$, and $c(q)\leq d_G(v)$ for any $q\in E$, $e\to q$.

Since $G$ is bipartite, any clique in $L(G)$ is the subset of $E(w)$ for some $w\in V$, then by definition of $D$ the kernel consists of the edge with the biggest (if $w\in X$) or the smallest (if $w\in Y$) value of $c$. It follows that $D$ is normal, so we can apply \autoref{lem1}, and $G$ is $f_v$-edge-choosable.
\end{proof}

\section{A transversal case}

Consider a graph $G=(V,E)$ and color sets $\{A_e\}_{e\in E}$. Define
$$
A_F=\bigcup_{e\in F} A_e,\, A(v)=A_{E(v)},\, A(u,v)=A_{E(u,v)}
$$
for $F\subseteq E$, $u,v\in V$.

If there is a pair of non-adjacent edges $e,q\in E$ with intersecting color sets, and $c\in A_e\cap A_q$, we will call a set $\{e,q,c\}$ a \emph{reducing set}. 

We will say that a \emph{transversal case} takes place, if there are no reducing sets, or, equivalently, if $A_e\cap A_q=\emptyset$ for any two non-adjacent edges $e,q\in E$.

In the transversal case, if a proper list edge coloring exists, each edge in it will be assigned a unique color. One can see that finding such a coloring is equivalent to finding a \emph{system of distinct representatives} (which is also called a \emph{transversal}) for a family of finite sets. The next theorem is the reformulation of the famous Hall's marriage theorem (see Theorem $1$ in \cite{H}).

\begin{theorem}\label{hall}
In the transversal case, a proper list edge coloring exists if and only if for any $F\subseteq E$,
$$
|F|\leq |A_F|.
$$
\end{theorem}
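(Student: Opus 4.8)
The plan is to reduce the statement directly to Hall's marriage theorem by identifying proper list edge colorings with systems of distinct representatives of the family $(A_e)_{e\in E}$. The key observation is that the transversal-case hypothesis forces any proper coloring to use globally distinct colors. So first I would argue: if $\{c_e\}_{e\in E}$ is a proper list edge coloring (i.e. $c_e\in A_e$ for all $e$, and $c_e\ne c_q$ whenever $e,q$ are adjacent) and we had $c_e=c_q$ for two distinct edges $e,q$, then $e$ and $q$ would have to be non-adjacent; but in the transversal case $A_e\cap A_q=\emptyset$ for non-adjacent $e,q$, contradicting $c_e=c_q\in A_e\cap A_q$. Hence all the $c_e$ are pairwise distinct, so $\{c_e\}$ is a system of distinct representatives for $(A_e)_{e\in E}$. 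Conversely, any choice $c_e\in A_e$ with the $c_e$ pairwise distinct is automatically a proper list edge coloring, since then adjacent edges certainly receive different colors. Thus, in the transversal case, proper list edge colorings are exactly the transversals of $(A_e)_{e\in E}$.

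Next I would invoke Hall's theorem for the finite family $(A_e)_{e\in E}$ (the multigraph $G$, and hence its edge set $E$, is finite): a system of distinct representatives exists if and only if for every index set $F\subseteq E$ we have $|F|\le\bigl|\bigcup_{e\in F}A_e\bigr|=|A_F|$. The necessity direction is immediate — the $|F|$ distinct representatives of the edges in $F$ all lie in $A_F$ — and the sufficiency direction is precisely the content of \cite{H} that we are entitled to cite. Combining this equivalence with the reduction of the first paragraph yields the claimed statement.

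There is no real obstacle here; the entire content lies in the translation of the coloring condition into the language of transversals, and the transversal-case hypothesis is exactly what collapses the local ``distinct on adjacent edges'' requirement into the global ``distinct representatives'' requirement. The only small point worth noting is that $F=\emptyset$ is permitted, where the inequality reads $0\le|A_\emptyset|=0$ and so imposes nothing. It is also worth remarking that this argument uses nothing about $G$ being line perfect; it applies to the transversal case for an arbitrary multigraph.
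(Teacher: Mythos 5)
Your argument is correct and is exactly the reduction the paper intends: the paper treats this theorem as a direct reformulation of Hall's marriage theorem, noting only that in the transversal case a proper list edge coloring must use pairwise distinct colors and is therefore the same thing as a system of distinct representatives for $(A_e)_{e\in E}$. Your write-up just makes both directions of that identification explicit, so there is nothing to add.
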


In the remaining theorems, our strategy would be to reduce any case to a transversal case, and then to apply \autoref{hall}.

\section{$K_4$ with multiple edges}

The next theorem considers not just cliques but arbitrary graphs on four vertices for the purposes of using the method of induction.

\begin{theorem}\label{thk4}
Let $G=(V,E)$ be a multigraph on four vertices. For any $v\in V$, $G$ is $f_{G,v}$-edge-choosable, where
$$
f_{G,v}(e)= 
\begin{cases}
d_G(v), & \text{if } e \in E(v),  \\
\chi'(G), & \text{otherwise}.
\end{cases}
$$
\end{theorem}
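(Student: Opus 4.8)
The plan is to induct on the number of edges of $G$. The base case is trivial (few enough edges that Hall's condition is easy or the graph is edgeless), so I focus on the inductive step. First I would dispose of every non-transversal situation: if there is a reducing set $\{e,q,c\}$ consisting of two non-adjacent edges $e,q$ sharing an available color $c$, I would simply assign $c$ to both $e$ and $q$, delete these edges, and remove $c$ from the lists of all edges meeting $e$ or $q$. On a four-vertex graph two non-adjacent edges form a perfect matching, so deleting them leaves a graph $G'$ on four vertices (a subgraph of $G$) with every vertex degree dropped by exactly one; one checks that the reduced list sizes still dominate $f_{G',v}$ (here the key inequalities are $d_{G'}(v)=d_G(v)-1$ and $\chi'(G')\ge\chi'(G)-1$, the latter holding because removing a perfect matching drops the size of every vertex star and of every triangle edge-set by at most one — or one argues directly via \autoref{chidef}). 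Then the inductive hypothesis applied to $G'$, together with the two forced colors, yields the coloring of $G$. So we may assume the transversal case, and it remains to verify Hall's condition from \autoref{hall}.

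Now in the transversal case I would verify $|F|\le|A_F|$ for every $F\subseteq E$ by a short case analysis on which vertices $F$ touches. Since $|V|=4$, the edge set $F$ involves at most $\binom{4}{2}=6$ vertex-pairs. The transversality hypothesis forces lists of disjoint (non-adjacent) edges to be disjoint, so I can partition $F$ according to a maximum matching inside $F$ and add up contributions. Concretely: if $F$ lies inside a single star $E(w)$, then $|F|\le d_G(w)\le\chi'(G)$ if $w\ne v$, while if $w=v$ every $e\in F$ has $|A_e|=d_G(v)=|E(v)|\ge|F|$; and if $|F|\ge 2$ here the edges are pairwise adjacent so no disjointness is available, but a single list already has size $\ge\max(d_G(v),\chi'(G))\ge d_G(w)\ge|F|$ when $w\ne v$, and $=d_G(v)\ge|F|$ when $w=v$ — so stars are fine. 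If $F$ contains two non-adjacent edges $e_1,e_2$, they cover all four vertices; any further edge of $F$ is adjacent to one of them, and one splits $F=F_1\sqcup F_2$ with $F_i$ a star at an endpoint of $e_i$, getting $|A_F|=|A_{F_1}|+|A_{F_2}|\ge|F_1|+|F_2|=|F|$ from the star bounds just established (being careful that at most one of the two stars is centered at $v$). The only genuinely delicate subcase is when $F$ spans a triangle but contains no non-adjacent pair, i.e. $F\subseteq E(a,b,c)$ with edges on all three sides: here $|F|\le|E(a,b,c)|\le\chi'(G)$ by \autoref{chidef}, so $|A_F|\ge|A_e|\ge\chi'(G)\ge|F|$ for any single $e\in F$ not incident to $v$ — and if every edge of the triangle is incident to $v$ then $v\in\{a,b,c\}$, the triangle has at most $d_G(v)$ edges incident to $v$ plus the opposite side, and one checks $|A_F|\ge|F|$ by combining the star bound at $v$ with the disjoint opposite-side edges.

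The main obstacle I expect is precisely this bookkeeping in the triangle/near-triangle subcases of Hall's condition: making sure that when $v$ is one of the four vertices, the smaller list size $d_G(v)$ on $E(v)$ does not break the count. The way through is that any $F$ with $|F|>d_G(v)$ must contain an edge not incident to $v$, hence an edge with full list size $\chi'(G)$, and must contain two disjoint edges unless it sits inside a triangle — and a triangle's edge set has size at most $\chi'(G)$ by \autoref{chidef}. Combining "there is a full-sized list inside $F$" with "either $|F|\le\chi'(G)$ or $F$ splits off a disjoint piece whose lists are disjoint from the rest" closes every case. Once Hall's condition is checked, \autoref{hall} finishes the transversal case, and together with the reduction above this completes the induction and proves \autoref{thk4}.
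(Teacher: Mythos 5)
Your overall strategy (induct by coloring a reducing set, then verify Hall's condition in the transversal case) is the same as the paper's, and the reduction step is essentially right: two non-adjacent edges on four vertices form a perfect matching, so every star and every triangle edge-set loses exactly one edge and $\chi'(G')=\chi'(G)-1$ by \autoref{chidef}. Note, however, that you state the inequality you need backwards: the induction requires $\chi'(G')\le\chi'(G)-1$, i.e.\ that the maximum \emph{drops} by at least one, whereas your ``drops by at most one'' justification supports only $\chi'(G')\ge\chi'(G)-1$; the correct direction follows precisely from the matching being perfect.

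The genuine gap is in the Hall verification for the main case, where $F$ contains two non-adjacent edges $e_1,e_2$. Your plan to split $F$ into two stars $F_1\sqcup F_2$ centered at endpoints of $e_1$ and $e_2$ fails: for $F=E(K_4)$ no union of two stars covers all six edges (for vertices $x,y,z,w$, the set $E(x)\cup E(z)$ misses the edge between $y$ and $w$, and the same happens for every choice of two centers), and even when such a partition exists, $|A_F|=|A_{F_1}|+|A_{F_2}|$ does not follow from transversality, since edges lying in different stars can still be adjacent and hence may have intersecting lists. What actually closes this case --- and what the paper does --- uses only the two disjoint edges themselves: $|A_F|\ge|A_{e_1}|+|A_{e_2}|\ge\chi'(G)+d_G(v)$, because $e_1,e_2$ cover all four vertices so exactly one of them is incident to $v$; and then $\chi'(G)+d_G(v)\ge|E(a,b,c)|+|E(v)|\ge|E|\ge|F|$, where $\{a,b,c\}=V\setminus\{v\}$, using \autoref{chidef} and the fact that every edge of $G$ lies in $E(v)\cup E(a,b,c)$. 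Your concluding paragraph gestures toward ``a full-sized list inside $F$'' plus a disjoint piece, but never establishes the key inequality $\chi'(G)+d_G(v)\ge|E|$; without it the case of large $F$ remains open. The star and triangle subcases you treat are fine once one is careful (as you partly are) to pick a representative edge not incident to $v$ whenever one exists.
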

\begin{proof}
Let us fix arbitrary color sets $\{A_e: |A_e|=f_{G,v}(e)\}_{e\in E}$.

We will use the induction on the number of edges in the graph. The base case would be a transversal case. We will consider it at the end of the proof.

If there is a reducing set $\{e,q,c\}$, then let us assign color $c$ to both $e$ and $q$, and consider $G'=(V,E')$, $E'=E\setminus\{e,q\}$, $A'_s=A_s\setminus\{c\}$ for $s\in E'$ (because we cannot assign $c$ to any other edge). It is easy to see that $|E'(w)|=|E(w)|-1$ for any $w\in V$, $|E'(a,b,c)|=|E(a,b,c)|-1$ for any $a,b,c\in V$, so, by \autoref{chidef}, $\chi'(G')=\chi'(G)-1$. Also, $|A'_s|\geq|A_s|-1$ for any $s\in E'$. By induction, $G'$ is $f_{G',v}$-edge-choosable, so we can finish properly coloring the remaining edges using color sets $\{A'_s\}$.

If a transversal case takes place, then in order to apply \autoref{hall} and prove $f_{G,v}$-edge-choosability of $G$, we need to prove that for any $F\subseteq E$ the inequality $|F|\leq |A_F|$ holds.

If $F$ contains a pair of non-adjacent edges $e$, $q$, then $|A_F|\geq |A_e\cup A_q|=|A_e|+|A_q|\geq \chi'(G)+d_G(v)$, since at least one of $e$, $q$ is not incident with $v$. But $\chi'(G)\geq |E(a,b,c)|$, where $\{a,b,c\}=V\setminus \{v\}$, and $E=E(a,b,c)\cup E(v)$, so $|A_F|\geq |E|\geq |F|$.

If $F$ does not contain a pair of non-adjacent edges, then necessarily either $F\subseteq E(w)$ for some $w\in V$, or $F\subseteq E(a,b,c)$ for some $a,b,c\in V$. If $F\subseteq E(v)$, then, taking any $e\in F$, $|A_F|\geq |A_e|=d_G(v)\geq |F|$. In all other cases there exists $e\in F\setminus E(v)$, so $|A_F|\geq |A_e|=\chi'(G)\geq |F|$.

\end{proof}

\begin{corollary}[LCC for graphs on four vertices]
For an arbitrary multigraph $G$ on four vertices, $\chi'(G)=\chi'_l(G)$.
\end{corollary}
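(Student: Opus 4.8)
The plan is to read this off directly from \autoref{thk4}, using only the trivial monotonicity of choosability. If a graph is $f$-edge-choosable and $g$ is a function with $g(e)\ge f(e)$ for every edge $e$, then it is $g$-edge-choosable as well: given lists of size $g(e)$, delete colors from each list until it has size exactly $f(e)$ and invoke $f$-choosability. So it is enough to compare $f_{G,v}$ with the constant function whose value is $\chi'(G)$.

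First I would fix an arbitrary vertex $v\in V$. For $e\in E(v)$ one has $f_{G,v}(e)=d_G(v)\le\chi'(G)$, since $\chi'(G)\ge\Delta(G)$ for every multigraph (the edges at a vertex of maximum degree are pairwise adjacent in the line graph); and for $e\notin E(v)$ one has $f_{G,v}(e)=\chi'(G)$. Hence $f_{G,v}(e)\le\chi'(G)$ for all $e\in E$. By \autoref{thk4} the multigraph $G$ (which has four vertices) is $f_{G,v}$-edge-choosable, so by the monotonicity observation it is $\chi'(G)$-edge-choosable, i.e.\ $\chi'_l(G)\le\chi'(G)$. Combined with the inequality $\chi'(G)\le\chi'_l(G)$, valid for every graph as noted in the introduction, this yields $\chi'(G)=\chi'_l(G)$.

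I do not expect any real obstacle here: all of the content is already carried by \autoref{thk4}, and the corollary merely records that the uniform list size $\chi'(G)$ dominates the tailored sizes $f_{G,v}$. The only step worth spelling out is the monotonicity passage from $f_{G,v}$-choosability to ordinary $\chi'(G)$-choosability; everything else is immediate.
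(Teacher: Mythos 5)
Your proof is correct and matches the paper's (implicit) derivation: the corollary is stated without a separate proof precisely because $f_{G,v}(e)\le\chi'(G)$ for every edge (as $d_G(v)\le\chi'(G)$), so \autoref{thk4} plus monotonicity of choosability and the general inequality $\chi'(G)\le\chi_l'(G)$ give the result. Nothing further is needed.
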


\section{$K_{1,1,n}$ with multiple edges}

Same as in the previous section, we consider a more general class of graphs for the purposes of using the method of induction.

Let $G=(V,E)$ be a graph consisting of $n+2$ vertices $v_1,\dots,v_n,a,b$ such that $\{v_1,\dots,v_n\}$ is an independent set. Define $t_G(v_i)=|E(a,b,v_i)|$.

\begin{lemma}\label{degab}
If $e,q\in E$ is a pair of non-adjacent edges, $G'=(V,E')$, where $E'=E\setminus\{e,q\}$, then $d_{G'}(a)=d_G(a)-1$, $d_{G'}(b)=d_G(b)-1$.
\end{lemma}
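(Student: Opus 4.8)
The statement to prove is Lemma~\ref{degab}: if $e,q\in E$ is a pair of non-adjacent edges in the graph $G=(V,E)$ on vertices $v_1,\dots,v_n,a,b$ (with $\{v_1,\dots,v_n\}$ independent), and $G'=(V,E')$ with $E'=E\setminus\{e,q\}$, then $d_{G'}(a)=d_G(a)-1$ and $d_{G'}(b)=d_G(b)-1$.

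\begin{proof}
The plan is simply to identify which endpoints $e$ and $q$ can have, given that $\{v_1,\dots,v_n\}$ is an independent set. Every edge of $G$ must have at least one endpoint in $\{a,b\}$: indeed, an edge joining two of the $v_i$'s (or a loop at some $v_i$, which we have excluded) would contradict independence of $\{v_1,\dots,v_n\}$. So each of $e$ and $q$ is incident to $a$ or to $b$ (or to both). First I would observe that $e$ and $q$ cannot both be incident to $a$, since then they would share the vertex $a$ and hence be adjacent, contrary to hypothesis; likewise they cannot both be incident to $b$. Therefore exactly one of $e,q$ is incident to $a$ and not to $b$, and the other is incident to $b$ and not to $a$. (If, say, $e$ were incident to both $a$ and $b$, then $q$ would have to be incident to one of $a,b$ as well, making $e$ and $q$ adjacent; so in fact neither $e$ nor $q$ is an $a$--$b$ edge.)

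Without loss of generality say $e\in E(a)\setminus E(b)$ and $q\in E(b)\setminus E(a)$. Then removing $e$ and $q$ from $E$ decreases $d_G(a)$ by exactly one (we delete $e\in E(a)$ and $q\notin E(a)$) and decreases $d_G(b)$ by exactly one (we delete $q\in E(b)$ and $e\notin E(b)$), which is the claim. There is no real obstacle here; the only thing to be careful about is the case analysis on endpoints, and in particular ruling out the possibility that one of the two non-adjacent edges is itself an $a$--$b$ edge, which follows immediately from non-adjacency once one notes every edge meets $\{a,b\}$.
\end{proof}
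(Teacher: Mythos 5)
Your proposal is correct and is essentially the paper's own argument: both proofs observe that since $\{v_1,\dots,v_n\}$ is independent every edge meets $\{a,b\}$, so non-adjacency forces one of $e,q$ to be incident with $a$ only and the other with $b$ only, whence each of $d_G(a)$, $d_G(b)$ drops by exactly one. You merely spell out the case analysis (ruling out a shared endpoint and an $a$--$b$ edge) that the paper states in one sentence.
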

\begin{proof}
Since $e,q$ are non-adjacent, one of them is incident with $a$, another is incident with $b$, and none of them are incident with both $a$ and $b$, so $d_{G'}(a)=d_G(a)-1$, $d_{G'}(b)=d_G(b)-1$.
\end{proof}

Let us call a vertex $v_i$ \emph{big (in $G$)}, if $t_G(v_i)\geq\max(d_G(a),d_G(b))$, and \emph{great (in $G$)}, if $t_G(v_i)>\max(d_G(a),d_G(b))$.

\begin{lemma}\label{great}
If $v_i$ is great in $G$, then no $v_j$, $j\neq i$ can be big in $G$. If $v_i$ is big in $G$, then no $v_j$, $j\neq i$ can be great in $G$. 
\end{lemma}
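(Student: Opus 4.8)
The plan is to translate the statement into elementary inequalities among edge multiplicities. I would write $m=|E(a,b)|$ and, for each $j$, $\alpha_j=|E(a,v_j)|$ and $\beta_j=|E(b,v_j)|$. Since $\{v_1,\dots,v_n\}$ is independent, the edge sets counted by $m$, by the $\alpha_j$'s and by the $\beta_j$'s are pairwise disjoint and exhaust $E$, so
$$
t_G(v_j)=m+\alpha_j+\beta_j,\qquad d_G(a)=m+\sum_{k=1}^{n}\alpha_k,\qquad d_G(b)=m+\sum_{k=1}^{n}\beta_k.
$$

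Next I would rewrite the two notions in these variables. Cancelling $m$ in $t_G(v_i)\ge d_G(a)$ and $t_G(v_i)\ge d_G(b)$ and isolating the $i$-th terms, $v_i$ is big precisely when $\beta_i\ge\sum_{k\ne i}\alpha_k$ and $\alpha_i\ge\sum_{k\ne i}\beta_k$, and $v_i$ is great precisely when both of these inequalities are strict. I would also observe that the two assertions of the lemma are in fact the same statement: each is equivalent to saying that there are no two distinct indices $i\ne j$ with $v_i$ great and $v_j$ big. So it suffices to rule out such a pair.

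Finally I would argue by contradiction. Suppose $v_i$ is great and $v_j$ is big with $i\ne j$. Greatness of $v_i$ together with $j\ne i$ gives $\alpha_i>\sum_{k\ne i}\beta_k\ge\beta_j$, while bigness of $v_j$ together with $i\ne j$ gives $\beta_j\ge\sum_{k\ne j}\alpha_k\ge\alpha_i$; hence $\alpha_i>\beta_j\ge\alpha_i$, which is absurd. (The symmetric chain $\beta_i>\alpha_j\ge\beta_i$ works equally well.) I do not expect any genuine obstacle here: the only points requiring care are the bookkeeping — verifying that the three families of edges are truly disjoint so that the degree identities are exact equalities, and keeping track of which partial sums omit the index $i$ or $j$. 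Once the problem is phrased in the $\alpha$, $\beta$ variables it reduces to this two-line chain of inequalities.
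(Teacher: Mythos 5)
Your proof is correct and follows essentially the same route as the paper: both translate \emph{big} and \emph{great} into inequalities among the edge multiplicities $|E(a,b)|$, $|E(a,v_j)|$, $|E(b,v_j)|$ and derive a contradiction by chaining them. The only cosmetic difference is that you observe the two assertions of the lemma are a single statement and dispatch it once, whereas the paper runs the (symmetric) argument twice.
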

\begin{proof}
Suppose $v_i$ is great in $G$. For any $v_j$, $j\neq i$:
$$
|E(a,v_j)|\leq d_G(a)-|E(a,v_i)|-|E(a,b)|<
$$
$$
< t_G(v_i)-|E(a,v_i)|-|E(a,b)|=|E(b,v_i)|.
$$
It follows that
$$
t_G(v_j)=|E(a,b)|+|E(a,v_j)|+|E(b,v_j)|<
$$
$$
<|E(a,b)|+|E(b,v_i)|+|E(b,v_j)|\leq d_G(b),
$$
so $v_j$ is not big in $G$.

If $v_i$ is big in $G$, then by similar reasoning for any $v_j$, $j\neq i$, $|E(a,v_j)|\leq |E(b,v_i)|$ and $t_G(v_j)\leq d_G(b)$, so $v_j$ is not great in $G$.
\end{proof}

\begin{lemma}\label{great2}
Let $t_G(v_1)\geq t_G(v_2)\geq\dots\geq t_G(v_n)$. No $v_i$, $i>1$ can be great in $G$. 
\end{lemma}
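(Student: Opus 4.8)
The plan is to argue by contradiction, using \autoref{great} as a black box. Suppose some $v_i$ with $i>1$ is great in $G$, i.e.\ $t_G(v_i)>\max(d_G(a),d_G(b))$.

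Since $v_i$ is great, \autoref{great} tells us that no $v_j$ with $j\neq i$ can be big in $G$; in particular $v_1$ is not big, which by definition means $t_G(v_1)<\max(d_G(a),d_G(b))$. Combining the two displayed inequalities gives
$$
t_G(v_1)<\max(d_G(a),d_G(b))<t_G(v_i).
$$
On the other hand, the hypothesis $t_G(v_1)\geq t_G(v_2)\geq\dots\geq t_G(v_n)$ together with $i>1$ yields $t_G(v_1)\geq t_G(v_i)$, contradicting the strict inequality above. Hence no $v_i$ with $i>1$ is great in $G$.

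I do not expect any real obstacle here: the statement is essentially a bookkeeping corollary of \autoref{great}, whose job is to guarantee that a great vertex is the unique "heavy" one, and the sorting assumption simply forces that unique candidate to be $v_1$. The only thing to be careful about is invoking \autoref{great} in the correct direction (great vertex $v_i$ rules out \emph{big} vertices $v_j$, and "big" is exactly the negation of the inequality $t_G(v_1)<\max(d_G(a),d_G(b))$ that we need), but this is immediate from the definitions of big and great.
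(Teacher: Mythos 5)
Your proof is correct and takes essentially the same route as the paper: both argue by contradiction from \autoref{great}, the only cosmetic difference being that you apply that lemma to the assumed great $v_i$ to conclude $v_1$ is not big, whereas the paper first notes that the ordering forces $v_1$ to be great and then applies the lemma to $v_1$ to conclude $v_i$ is not big. Either direction yields the same immediate contradiction.
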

\begin{proof}
If $v_i$, $i>1$ is great in $G$, then $v_1$ is also great in $G$, but then by \autoref{great}, $v_i$ is not big in $G$, and we come to the contradiction.
\end{proof}

\begin{lemma}\label{great3}
Let $e,q\in E$ be a pair of non-adjacent edges, $G'=(V,E')$, where $E'=E\setminus\{e,q\}$. If $v_i$ is big (great) in $G$, then $v_i$ is also big (great) in $G'$. If $v_i$ is great in $G'$, then $v_i$ is big in $G$.
\end{lemma}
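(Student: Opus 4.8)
The plan is to track exactly how the three relevant quantities $t(v_i)$, $d(a)$ and $d(b)$ change when we pass from $G$ to $G'=(V,E\setminus\{e,q\})$, and then simply to unwind the definitions of \emph{big} and \emph{great}.

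First I would invoke \autoref{degab} to record that $d_{G'}(a)=d_G(a)-1$ and $d_{G'}(b)=d_G(b)-1$, hence $\max(d_{G'}(a),d_{G'}(b))=\max(d_G(a),d_G(b))-1$. Next I would pin down the effect on $t_{G'}(v_i)=|E'(a,b,v_i)|$. Since $e$ and $q$ are non-adjacent, the argument already used in the proof of \autoref{degab} shows that one of them is incident with $a$, the other with $b$, and neither with both; in particular neither $e$ nor $q$ lies in $E(a,b)$. Because $E(a,b,v_i)=E(a,b)\cup E(a,v_i)\cup E(b,v_i)$, the only way $e$ or $q$ can belong to $E(a,b,v_i)$ is by being incident with $v_i$, and as $e,q$ are non-adjacent at most one of them is. Therefore $|\{e,q\}\cap E(a,b,v_i)|\le 1$, which gives $t_G(v_i)-1\le t_{G'}(v_i)\le t_G(v_i)$.

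With these two facts the three assertions fall out. If $v_i$ is big in $G$, then $t_{G'}(v_i)\ge t_G(v_i)-1\ge\max(d_G(a),d_G(b))-1=\max(d_{G'}(a),d_{G'}(b))$, so $v_i$ is big in $G'$; running the same computation with strict inequalities (and using integrality of the degrees) yields the \emph{great} version. Conversely, if $v_i$ is great in $G'$, then $t_{G'}(v_i)>\max(d_{G'}(a),d_{G'}(b))=\max(d_G(a),d_G(b))-1$, i.e. $t_{G'}(v_i)\ge\max(d_G(a),d_G(b))$, and since $t_G(v_i)\ge t_{G'}(v_i)$ we conclude $v_i$ is big in $G$.

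There is no genuine obstacle here; the one spot that needs a little care is the bookkeeping in the middle step — that deleting a non-adjacent pair decreases $t(v_i)$ by at most $1$ — and this rests precisely on the non-adjacency of $e$ and $q$, which is used both to exclude membership in $E(a,b)$ and to prevent both edges from meeting the same $v_i$.
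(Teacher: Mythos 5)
Your proof is correct and follows essentially the same route as the paper: apply \autoref{degab} to get $d_{G'}(a)=d_G(a)-1$, $d_{G'}(b)=d_G(b)-1$, note $t_G(v_i)-1\le t_{G'}(v_i)\le t_G(v_i)$, and unwind the definitions. The only difference is that you justify the bound $t_{G'}(v_i)\ge t_G(v_i)-1$ explicitly (the paper asserts it without comment), which is a welcome addition since it is the one place where non-adjacency of $e$ and $q$ is genuinely needed.
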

\begin{proof}
By \autoref{degab}, $d_{G'}(a)=d_G(a)-1$, $d_{G'}(b)=d_G(b)-1$.

If $v_i$ is big in $G$, then $t_{G'}(v_i)\geq t_G(v_i)-1\geq\max(d_G(a),d_G(b))-1=\max(d_{G'}(a),d_{G'}(b))$, so $v_i$ is big in $G'$. By the same reasoning, if $v_i$ is great in $G$, then $v_i$ is great in $G'$.

If $v_i$ is great in $G'$, then $t_G(v_i)\geq t_{G'}(v_i)\geq\max(d_{G'}(a),d_{G'}(b))+1=\max(d_G(a),d_G(b))$, so $v_i$ is big in $G$.
\end{proof}

There are two cases to consider: $v$ from the statement of \autoref{main2} is either one of $\{a,b\}$, or one of $\{v_1,\dots,v_n\}$. We consider each case in a separate theorem.

\begin{theorem}\label{thma}
Let $G=(V,E)$ be a multigraph consisting of $n+2$ vertices $v_1,\dots,v_n,a,b$ such that $\{v_1,\dots,v_n\}$ is an independent set. $G$ is $f_G$-edge-choosable, where
$$
f_G(e)= 
\begin{cases}
d_G(a), & \text{if } e \in E(a),  \\
\max(d_G(a),d_G(b),t_G(v_i)), & \text{if } e \in E(b,v_i).
\end{cases}
$$
\end{theorem}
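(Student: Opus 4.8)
The plan is to follow the template of \autoref{thk4}: induct on $|E|$, with the transversal case as the base (closed by \autoref{hall}) and a reducing set driving the inductive step; throughout one fixes lists $\{A_e\}$ with $|A_e|=f_G(e)$. An organizing observation: since $\{v_1,\dots,v_n\}$ is independent, every edge meets $a$ or $b$, so $E=E(a)\sqcup\bigcup_i E(b,v_i)$, with lists of size $d_G(a)$ on $E(a)$ and of size $\max(d_G(a),d_G(b),t_G(v_i))$ on $E(b,v_i)$.

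For the base case I would assume no reducing set exists, so non-adjacent edges have disjoint lists, and verify Hall's condition $|F|\le|A_F|$ for every $F\subseteq E$. The decisive point is that the only non-adjacent pairs in a $K_{1,1,n}$-type graph are an edge of $E(a,v_j)$ together with an edge of $E(b,v_k)$ with $j\neq k$ (every edge of $E(a,b)$ meets all edges). Hence if $F$ contains a non-adjacent pair $\{e,q\}$ then $|A_F|\ge|A_e|+|A_q|\ge d_G(a)+d_G(b)\ge|E|\ge|F|$, using $|E|=d_G(a)+\sum_i|E(b,v_i)|\le d_G(a)+d_G(b)$; and if $F$ has no non-adjacent pair, then $F$ lies in $E(a)$, or in $E(b)$, or in $E(a,b,v_j)$ for a single $j$, and in each case some list occurring in $F$ has size $|E(a)|=d_G(a)$, $|E(b)|=d_G(b)$, or $|E(a,b,v_j)|=t_G(v_j)$, which is already $\ge|F|$. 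Then \autoref{hall} yields the coloring.

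For the inductive step I would take a reducing set $\{e,q,c\}$, relabelled so $e\in E(a,v_j)$, $q\in E(b,v_k)$ with $j\neq k$, put $c$ on both $e$ and $q$ (legal: they are non-adjacent, and every other edge meets one of them, so none can reuse $c$), and pass to $G'=(V,E\setminus\{e,q\})$ with lists $A'_s=A_s\setminus\{c\}$. By \autoref{degab}, $d_{G'}(a)=d_G(a)-1$, $d_{G'}(b)=d_G(b)-1$, while $t_{G'}(v_l)=t_G(v_l)-1$ for $l\in\{j,k\}$ and $t_{G'}(v_l)=t_G(v_l)$ otherwise; a short computation then gives $f_{G'}(s)\le f_G(s)-1\le|A'_s|$ for every remaining edge $s$, with a single exception: an edge $s\in E(b,v_l)$ with $v_l$ big in $G$, $l\notin\{j,k\}$, and $c\in A_s$, for which $f_{G'}(s)=t_G(v_l)$ but $|A'_s|$ can drop to $t_G(v_l)-1$. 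I expect this exception to be the main obstacle.

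To handle it, the idea is that the reducing set can always be chosen to make the exception vacuous. If $\{e,q,c\}$ meets a bad big vertex $v_l$ via some $s\in E(b,v_l)$ with $c\in A_s$, then $\{e,s,c\}$ is again a reducing set, now with $v_l$ among its two indices. Were it still bad, there would be a second big vertex $v_m$ with an edge of $E(b,v_m)$ carrying $c$; but I would show that having two big vertices forces the inequalities defining ``big'' to collapse, making every $v_i$ outside those two isolated, hence every reducing set has both indices among the two big vertices --- contradicting $l\notin\{j,k\}$ for the original set. So a reducing set free of the exception exists; reducing along it keeps every list large enough, and the induction hypothesis applied to $G'$ completes the coloring. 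The lemmas on big and great vertices (\autoref{great}, \autoref{great2}, \autoref{great3}) are exactly the tools for this bookkeeping, and making this last case analysis airtight is the heart of the proof.
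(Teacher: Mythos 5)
Your proposal is correct and follows the same skeleton as the paper's proof: induction on $|E|$, reducing sets for the inductive step, and Hall's theorem (\autoref{hall}) for the transversal base case, with the only delicate point being an edge $s\in E(b,v_l)$ at a big vertex $v_l$ whose index avoids the reducing set while $c\in A_s$. Where you diverge is in how that point is resolved. The paper sorts the $v_i$ so that $t_G(v_1)$ is maximal, uses \autoref{great}, \autoref{great2} and \autoref{great3} to conclude that only $v_1$ can be great in $G'$, and then \emph{prefers} a reducing set meeting $E(v_1)$; if no such reducing set exists it argues directly that $c\notin A(v_1)$, so the lists on $E(b,v_1)$ are untouched. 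You instead repair an arbitrary reducing set by swapping $q$ for the offending edge $s$, and rule out a second offender via the structural claim that two distinct big vertices force every other $v_i$ to be isolated. That claim is not in the paper, but it is true and provable: writing out $t_G(v_l)\ge d_G(a)$ and $t_G(v_m)\ge d_G(b)$ gives $|E(b,v_l)|\ge\sum_{i\ne l}|E(a,v_i)|\ge|E(a,v_m)|\ge\sum_{i\ne m}|E(b,v_i)|\ge|E(b,v_l)|$, forcing $E(a,v_i)=E(b,v_i)=\emptyset$ for $i\notin\{l,m\}$ and hence contradicting $e\in E(a,v_j)$ with $j\notin\{l,m\}$. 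So your route closes, at the cost of one extra structural lemma; the paper's preference-for-$v_1$ device is slightly more economical and is also the device it reuses in the harder \autoref{thmv}.
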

\begin{remark}
Note that $\max(d_G(a),d_G(b),t_G(v_i))\leq\chi'(G)$, so the statement of the theorem is slightly stronger than needed. This is for the purposes of using the method of induction.
\end{remark}
\begin{proof}	
We can assume that $t_G(v_1)\geq t_G(v_2)\geq\dots\geq t_G(v_n)$.
	
Let us fix arbitrary color sets $\{A_e: |A_e|=f_G(e)\}_{e\in E}$.

We will use the induction on the number of edges in the graph. The base case (a traversal case) would be considered at the end of the proof.

Suppose there is at least one reducing set. If there is such reducing set $\{e,q,c\}$ that $\{e,q\}\cap E(v_1)\neq\emptyset$, then we choose it, otherwise we choose any reducing set. We assign color $c$ to $e$ and $q$, and consider $G'=(V,E')$, $E'=E\setminus\{e,q\}$, $A'_s=A_s\setminus\{c\}$ for $s\in E'$. If we can prove that $|A'_s|\geq f_{G'}(s)$ for any $s\in E'$, then by induction edges of $G'$ can be properly colored using color sets $\{A'_s\}$.

To prove the inequality $|A'_s|\geq f_{G'}(s)$ for $s\in E'$ (from now on we refer to it as the \emph{color set inequality}), it is enough to show that either $f_{G'}(s)=f_G(s)-1$, or that $A'_s=A_s$.

By \autoref{degab}, $d_{G'}(a)=d_G(a)-1$ and $d_{G'}(b)=d_G(b)-1$. This means that for $s\in E'(a)$:
$$
f_{G'}(s)=d_{G'}(a)=d_G(a)-1=f_G(s)-1,
$$
so the color set inequality holds for any $s\in E'(a)$.

By \autoref{great3}, if $v_i$ is not great in $G'$, then it is also not great in $G$, so for any $s\in E'(b,v_i)$: 
$$
f_{G'}(s)=\max(d_{G'}(a),d_{G'}(b))=\max(d_G(a),d_G(b))-1=f_G(s)-1,
$$
and the color set inequality holds for any $s\in E'(b,v_i)$, $v_i$ is not great in $G'$.

Now, we only need to prove the color set inequality for $s\in E'(b,v_i)$, where $v_i$ is great in $G'$. If none of $v_i$ are big in $G$, then none of them are great in $G'$. Otherwise $v_1$ is big in $G$, then by \autoref{great3} it is also big in $G'$, then by \autoref{great} none of $v_i$, $i>1$ are great in $G'$. So, only $v_1$ can be great in $G'$.

The reducing set $\{e,q,c\}$ was chosen in such a way that either $\{e,q\}\cap E(v_1)\neq\emptyset$, or $c\notin A(v_1)$. In the first case $f_{G'}(s)=f_G(s)-1$ for $s\in E'(b,v_1)$, and in the second case $A_s=A'_s$ for $s\in E'(b,v_1)$, so in both cases all color set inequalities hold.

If a transversal case takes place, then in order to apply \autoref{hall} and prove $f_G$-edge-choosability of $G$, we need to prove that for any $F\subseteq E$ the inequality $|F|\leq |A_F|$ holds.

If $F$ does not contain a pair of non-adjacent edges, then necessarily either $F\subseteq E(a)$, $F\subseteq E(b)$ or $F\subseteq E(a,b,v_i)$ for some $i$. In the first case, taking any $e\in F$, $|A_F|\geq|A_e|=d_G(a)\geq|F|$. In all other cases there exists $e\in F\setminus E(a)$, so $|A_F|\geq|A_e|\geq \max(d_G(b), t_G(v_i))\geq|F|$.

If $F$ contains a pair of non-adjacent edges $e$, $q$, then $|A_F|\geq |A_e\cup A_q|=|A_e|+|A_q|\geq d_G(a)+d_G(b)\geq |E|\geq |F|$.
\end{proof}

\begin{theorem}\label{thmv}
Let $G=(V,E)$ be a multigraph consisting of $n+2$ vertices $v_1,\dots,v_n,a,b$ such that $\{v_1,\dots,v_n\}$ is an independent set. $G$ is $f_G$-edge-choosable, where
$$
f_G(e)= 
\begin{cases}
d_G(v_1), & \text{if } e \in E(v_1),\\
\max(d_G(a),d_G(b),t_G(v_1)), & \text{if } e \in E(a,b),\\
\max(d_G(a),d_G(b),t_G(v_i)), & \text{if } e \in E(v_i), i>1.
\end{cases}
$$
\end{theorem}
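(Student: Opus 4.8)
\section{Plan of proof of \autoref{thmv}}

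The plan is to copy the skeleton of \autoref{thk4} and \autoref{thma}: induct on $|E|$, take the transversal case as the base (finished by \autoref{hall}), and in the inductive step fix a reducing set $\{e,q,c\}$, assign $c$ to both $e$ and $q$, delete them to form $G'=(V,E\setminus\{e,q\})$ with lists $A'_s=A_s\setminus\{c\}$, and conclude by induction — which requires the \emph{color set inequality} $|A'_s|\ge f_{G'}(s)$ for every $s\in E'$, i.e.\ that for each such $s$ either $f_{G'}(s)=f_G(s)-1$ or $A'_s=A_s$. First I would record, as in \autoref{thma}, that $f_G(e)\le\chi'(G)$ in all three clauses (using $d_G(v_1)\le t_G(v_1)\le\chi'(G)$ and \autoref{chidef}), so the statement is formally stronger than what \autoref{main2} needs. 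I would assume w.l.o.g.\ $t_G(v_2)\ge t_G(v_3)\ge\dots\ge t_G(v_n)$ — legitimate since $v_1$ is the only distinguished vertex among the $v_i$ — and dispose once and for all of the case in which two distinct $v_i,v_j$ are both big: combining the four inequalities $|E(b,v_i)|\ge\sum_{k\ne i}|E(a,v_k)|$, $|E(a,v_i)|\ge\sum_{k\ne i}|E(b,v_k)|$ and their $v_j$-analogues forces $|E(a,v_k)|=|E(b,v_k)|=0$ for every $k\notin\{i,j\}$, so $G$ (after dropping isolated vertices) lives on at most four vertices and we are done by \autoref{thk4}, since our $f_G$ is pointwise at most the list function used there. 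Hence we may assume at most one $v_i$ is big, and by the sorting together with \autoref{great} that big vertex, if it exists, is $v_1$ or $v_2$.

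For the reducing step, note first that a pair of non-adjacent edges always consists of one edge of some $E(a,v_p)$ and one of some $E(b,v_r)$ with $p\ne r$, while the edges of $E(a,b)$ are adjacent to everything and so never lie in a reducing set. By \autoref{degab} and \autoref{great3}, deleting such a pair lowers $d_G(a),d_G(b)$ by one, lowers $d_G(v_i)$ and $t_G(v_i)$ by one exactly for $i\in\{p,r\}$, and leaves $t_G(v_j)$ unchanged otherwise; running this through the three clauses of $f_G$ shows the color set inequality can fail for $s\in E'$ only when $s\in E'(v_1)$ with $1\notin\{p,r\}$, or $s\in E'(a,b)$ with $v_1$ big and $1\notin\{p,r\}$, or $s\in E'(v_2)$ with $v_2$ big and $2\notin\{p,r\}$ (and the second and third possibilities are mutually exclusive, since at most one of $v_1,v_2$ is big). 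I would therefore choose the reducing set by the priority: prefer one that hits $v_1$, and if $v_2$ is big prefer one whose index pair is exactly $\{1,2\}$. When such a reducing set exists, every potentially bad $s$ gets $f_{G'}(s)=f_G(s)-1$ and induction applies. When it does not, the usual swapping trick takes over: if the forced color $c$ were to lie in $A(v_i)$ for an un-hit problematic $v_i$ (via an edge $t$ incident to $v_i$), then replacing $e$ or $q$ by $t$ would yield a strictly higher-priority reducing set, a contradiction; hence $c\notin A(v_i)$ and $A'_s=A_s$ on those edges.

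The transversal base case is handled by \autoref{hall}: one must show $|F|\le|A_F|$ for every $F\subseteq E$. If $F$ contains no non-adjacent pair then $F\subseteq E(w)$ for some $w$ or $F\subseteq E(a,b,v_i)$ for some $i$, and a single list suffices — $|A_e|=d_G(v_1)\ge|F|$ when $F\subseteq E(v_1)$, and otherwise $F$ contains an edge $e$ with $|A_e|\ge\max(d_G(a),d_G(b),t_G(v_i))\ge|F|$. If $F$ contains a non-adjacent pair, pick $e\in E(a,v_p)$, $q\in E(b,v_r)$ with $p\ne r$; their lists are disjoint, so $|A_F|\ge|A_e|+|A_q|$, and using $d_G(a)+d_G(b)=|E|+|E(a,b)|\ge|E|\ge|F|$ this already closes the sub-case where neither $e$ nor $q$ is incident to $v_1$. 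If one of them is at $v_1$ one should instead choose the non-adjacent pair to avoid $v_1$ whenever $F$ is large, or enlarge the union by further pairwise-disjoint lists; this is the one place in the base case needing a little care.

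The hard part will be the sub-case $v_1$ big with some reducing set present but none of them hitting $v_1$: the edges of $E(a,b)$ carry the term $t_G(v_1)$ in their list size yet never belong to a reducing set, so the naive deletion saves no color on $E(a,b)$ and the color set inequality fails there (and, unlike for edges at a $v_i$, the swapping trick does not rescue it). I expect the remedy to be a preliminary move that colors the whole triangle $E(a,b,v_1)$ directly: by \autoref{hall} its edges — being pairwise adjacent — can always be list-colored, since for $F\subseteq E(a,b,v_1)$ one has $|A_F|\ge t_G(v_1)\ge|F|$ as soon as $F$ meets $E(a,b)$ and $|A_F|\ge d_G(v_1)\ge|F|$ otherwise; after this $v_1$ is isolated and a smaller $K_{1,1,\cdot}$-type graph remains, to be finished by \autoref{thma} or by the induction hypothesis. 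The crux — and the last thing to nail down — is the bookkeeping that, after removing the colors used in the triangle, the residual lists on the edges of $E(a,v_j)$ and $E(b,v_j)$ are still large enough for that recursive application.
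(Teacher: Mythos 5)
Your skeleton matches the paper's (induction on $|E|$, reducing sets, Hall's theorem in the transversal base case), and your preliminary reduction of the two-big-vertices case to \autoref{thk4} is correct, but there is a genuine gap at exactly the point where the paper spends most of its effort. The ``swapping trick'' you invoke for an un-hit problematic vertex fails when $v_2$ is big: suppose the chosen reducing set is $\{e,q,c\}$ with $e\in E(a,v_1)$, $q\in E(b,v_j)$, $j>2$, and $c$ also lies in $A_t$ for some $t\in E(a,v_2)$ but in no list of $E(b,v_1)\cup E(b,v_2)$. Then $t$ is adjacent to $e$ (they share $a$), so the only available swap produces the pair $\{t,q\}$ with index pair $\{2,j\}$ --- \emph{lower} priority under your ordering, not higher --- and no contradiction results. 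In that situation $f_{G'}(s)=t_{G'}(v_2)=t_G(v_2)=f_G(s)$ for $s\in E'(a,v_2)$ while $A'_s\neq A_s$, so the color set inequality genuinely fails. The paper's proof devotes its entire second half to precisely this configuration: it isolates ``$a$-splitting'' and ``$b$-splitting'' colors, performs a double reduction (removing two reducing sets at once) when both types occur, and otherwise passes to a strictly weaker invariant --- the ``weak inequalities'', under which edges of $E(a,v_2)$ only retain lists of size $\max(d_G(a),d_G(b))$ but the union bound $|A_s\cup A_r|\geq t_G(v_2)+d_G(v_1)$ for $r\in E(b,v_1)$, $s\in E(a,v_2)$ is preserved --- then runs a second induction maintaining that invariant and reproves Hall's condition from the weak inequalities alone. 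None of this is recoverable from your priority argument, and your transversal-case sketch (which assumes the full $f_G$ bounds) would also have to be redone under the weak inequalities.

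Second, the device you propose for the $E(a,b)$ edges --- coloring the triangle $E(a,b,v_1)$ outright and recursing --- is the step you yourself flag as unverified, and it is not the paper's route: after using up to $t_G(v_1)$ colors on that triangle, an edge of $E(a,v_j)$ can lose up to $|E(a,b)|+|E(a,v_1)|$ colors from its list, and there is no evident reason the residue satisfies the hypotheses of \autoref{thma} or the inductive hypothesis. The paper instead preprocesses with a one-edge reduction: as long as $A(a,b)\not\subseteq A(v_1)$, it colors a single edge of $E(a,b)$ with a color outside $A(v_1)$ and deletes it (this lowers $d_G(a)$, $d_G(b)$ and every $t_G(v_i)$ by one while leaving all lists at $v_1$ untouched); once $A(a,b)\subseteq A(v_1)$, any reducing color avoiding $A(v_1)$ automatically avoids $A(a,b)$, so the lists on $E(a,b)$ never need to shrink. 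You would need to find this (or an equivalent) preprocessing step, and then still resolve the $a$-splitting obstruction above, before the proof closes.
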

\begin{proof}
We can assume that $t_G(v_2)\geq t_G(v_3)\geq\dots\geq t_G(v_n)$.

Let us fix arbitrary color sets $\{A_e: |A_e|=f_G(e)\}_{e\in E}$.

We will use the induction on the number of edges in the graph. The base case would be considered at the end of the proof.

If $A(a,b)\not\subseteq A(v_1)$, we take $e\in E(a,b)$ such that there exists $c\in A_e\setminus A(v_1)$, assign color $c$ to $e$ and consider $G'=(V,E')$, $E'=E\setminus\{e\}$, $A'_s=A_s\setminus\{c\}$ for $s\in E'$. $d_{G'}(a)=d_G(a)-1$, $d_{G'}(b)=d_G(b)-1$, and $t_{G'}(v_i)=t_G(v_i)-1$ for all $i$, so $|A'_s|\geq f_{G'}(s)$ for $s\in E'\setminus E'(v_1)$. For $s\in E'(v_1)$, $A'_s=A_s$, so $|A'_s|=|A_s|=f_G(s)=f_{G'}(s)$. Then, by induction, we can finish properly coloring the remaining edges.

From now on we assume that $A(a,b)\subseteq A(v_1)$.

Suppose there is at least one reducing set. We carefully choose some reducing set $\{e,q,c\}$ (below we consider several cases and show the exact way of choosing the reducing set in each case); assing color $c$ to $e$ and $q$, and consider $G'=(V,E')$, $E'=E\setminus\{e,q\}$, $A'_s=A_s\setminus\{c\}$ for $s\in E'$. If we can prove that $|A'_s|\geq f_{G'}(s)$ for any $s\in E'$ (the \emph{color set inequalities}), then by induction edges of $G'$ can be properly colored using color sets $\{A'_s\}$. To prove the color set inequality for $s\in E'$, it is enough to show that either $f_{G'}(s)=f_G(s)-1$, or that $A'_s=A_s$.

By \autoref{degab}, $d_{G'}(a)=d_G(a)-1$ and $d_{G'}(b)=d_G(b)-1$.

If $i>1$ and $v_i$ is not great in $G'$, by \autoref{great3} it is also not great in $G$, so for $s\in E'(v_i)$:
$$
f_{G'}(s)=\max(d_{G'}(a),d_{G'}(b))=\max(d_G(a),d_G(b))-1=f_G(s)-1,
$$
and the color set inequality holds for any choice of $\{e,q,c\}$.

Now, we only need to prove the color set inequality for $s\in E'(v_1)$, $s\in E'(a,b)$, and for $s\in E'(v_i)$, where $i>1$ and $v_i$ is great in $G'$.

Suppose that none of $v_2,\dots,v_n$ are big in $G$, then by \autoref{great3} none of them are great in $G'$. If there is such reducing set $\{e,q,c\}$ that $\{e,q\}\cap E(v_1)\neq\emptyset$, then we choose it, otherwise we choose any reducing set. Note that in second case $c\notin A(v_1)$. In the first case $f_{G'}(s)=f_G(s)-1$ for $s\in E'(v_1)\cup E'(a,b)$, and in the second case $A_s=A'_s$ for $s\in E'(v_1)\cup E'(a,b)$ (here we use the fact that $A(a,b)\subseteq A(v_1)$), so either way the color set inequality holds for any $s\in E'$.

From now on we assume that $v_2$ is big in $G$.

By \autoref{great3}, $v_2$ is also big in $G'$, then by \autoref{great}, $v_3,\dots,v_n$ and $v_1$ are not great in $G'$. That means that for $s\in E'(a,b)$: $f_{G'}(s)=f_G(s)-1$, so the color set inequality holds for $s\in E'(a,b)$ for any choice of the reducing set.

We still need to prove the color set inequality for $s\in E'(v_1)\cup E'(v_2)$.

Suppose we can choose reducing set $\{e,q,c\}$ such that $e\in E(v_1)$ and $q\in E(v_2)$. Then $d_{G'}(v_1)=d_G(v_1)-1$, $t_{G'}(v_2)=t_G(v_2)-1$, so $f_{G'}(s)=f_G(s)-1$ for $s\in E'(v_1)\cup E'(v_2)$, and all color set inequalities hold.

From now on we assume that for any reducing set $\{e,q,c\}$ such that $c\in A(v_1)\cap A(v_2)$, $e$ and $q$ cannot both be from $E(v_1)\cup E(v_2)$. That means that one of the following is true: either $c\notin A(b,v_1)\cup A(b,v_2)$ and $c\in A(a,v_1)\cap A(a,v_2)\cap A(b,v_i)$ for some $i>2$, or  $c\notin A(a,v_1)\cup A(a,v_2)$ and $c\in A(b,v_1)\cap A(b,v_2)\cap A(a,v_i)$ for some $i>2$. We will call color $c$ \emph{$a$-splitting} in the first case, and \emph{$b$-splitting} in the second.

Suppose there are colors $c_1$ and $c_2$ such that $c_1$ is $a$-splitting and $c_2$ is $b$-splitting. Then we can choose two reducing sets $\{e_1,q_1,c_1\}$ and $\{e_2,q_2,c_2\}$ in such a way that $e_1\in E(a,v_1)$, $q_1\in E(b,v_i)$ for some $i>2$, $e_2\in E(b,v_2)$, $q_2\in E(a,v_j)$ for some $j>2$. We assign color $c_1$ to $e_1$ and $q_1$, color $c_2$ to $e_2$ and $q_2$, and consider $G'=(V,E')$, $E'=E\setminus\{e_1,q_1,e_2,q_2\}$, $A'_s=A_s\setminus\{c_1,c_2\}$ for $s\in E'$. To prove that edges of $G'$ can be properly colored using color sets $\{A'_s\}$, we once again need to prove the color set inequalities $|A'_s|\geq f_{G'}(s)$ for all $s\in E'$.

$d_{G'}(a)=d_G(a)-2$, $d_{G'}(b)=d_G(b)-2$. $v_2$ is big in $G$, so by \autoref{great3} (applied twice) it is also big in $G'$, then by \autoref{great} none of $v_3,\dots,v_n$ and $v_1$ are great in $G'$, and the color set inequality holds for $s\in E'(v_i)$, $i>2$, and for $s\in E'(a,b)$.

As for $s\in E'(v_1)$, $c_1\notin A(b,v_1)$ and $c_2\notin A(a,v_1)$, so
$$
|A'_s|\geq |A_s|-1=f_G(s)-1=d_G(v_1)-1=d_{G'}(v_1)=f_{G'}(s).
$$
Similar reasoning shows that $|A'_s|\geq f_{G'}(s)$ for $s\in E'(v_2)$. So, once again, by induction we can finish properly coloring the remaining edges.

From now on we assume that there can only be one type of $a$-splitting, $b$-splitting colors. Because of simmetry we can assume that there are no $b$-splitting colors.

We have reduced an arbitrary case to a case with the following properties: $A(a,b)\subseteq A(v_1)$; either a transversal case takes place, or $v_2$ is big in $G$ and for any reducing set $\{e,q,c\}$ such that $c\in A(v_1)\cap A(v_2)$: $c$ is $a$-splitting. This is the base case of our induction.

The following inequalities hold (essentially we set a weaker bound on the size of $|A_s|$, $s\in E(a,v_2)$, everything else is from definition of $f_G$):
$$
|A_s|\geq 
\begin{cases}
d_G(v_1), & \text{if } s \in E(v_1),\\
\max(d_G(a),d_G(b),t_G(v_1)), & \text{if } s \in E(a,b),\\
\max(d_G(a),d_G(b),t_G(v_i)), & \text{if } s \in E(v_i), i>2,\\
\max(d_G(a),d_G(b),t_G(v_2)), & \text{if } s \in E(b,v_2),\\
\max(d_G(a),d_G(b)), & \text{if } s \in E(a,v_2).
\end{cases}
$$
Additionally,
$$
|A_s\cup A_r|\geq t_G(v_2)+d_G(v_1) \text{ for any } r\in E(b,v_1),s\in E(a,v_2).
$$

We will refer to this set of inequalities as \emph{weak inequalities}.
To prove the existence of proper list edge coloring in the base case, we again employ the method of induction (on the number of edges). We will reduce all cases to a transversal case in such a way that weak inequalities still hold, and then prove that in a transversal case weak inequalities are enough to satisfy conditions of \autoref{hall}.

Suppose there is at least one reducing set. Once again, we carefully choose some reducing set $\{e,q,c\}$ (again, we clarify the exact way of choosing below); assing color $c$ to $e$ and $q$, and consider $G'=(V,E')$, $E'=E\setminus\{e,q\}$, $A'_s=A_s\setminus\{c\}$ for $s\in E'$. If we can prove that weak inequalities still hold for $G'$, $\{A'_s\}$, then by induction edges of $G'$ can be properly colored using color sets $\{A'_s\}$.

By \autoref{degab}, $d_{G'}(a)=d_G(a)-1$, $d_{G'}(b)=d_G(b)-1$, so weak inequalities hold for $s\in E'(a,v_2)$ for any choice of the reducing set.

$v_2$ is big in $G$, so by \autoref{great3} it is also big in $G'$. Also, by \autoref{great} $v_3,\dots,v_n$ and $v_1$ are not great in $G'$. Then weak inequalities also hold for $s\in E'(v_i)$, $i>2$, and for $s\in E'(a,b)$, regardless of the choice of $\{e,q,c\}$.

We still need to prove weak inequalities for $s\in E'(v_1)\cup E'(b,v_2)$, and the last weak inequality.

Suppose we can choose such reducing set $\{e,q,c\}$ that $e\in E(v_1)$ and $q\in E(v_i)$ for some $i>2$. Note that either $c$ is $a$-splitting, or $c\in A(v_1)\setminus A(v_2)$, but in both cases $c\notin A(b,v_2)$. $A_s=A'_s$ for any $s\in E'(b,v_2)$, $|A'_s|\geq|A_s|-1\geq d_G(v_1)-1=d_{G'}(v_1)$ for any $s\in E'(v_1)$, and $|A'_s\cup A'_r|\geq|A_s\cup A_r|-1\geq t_G(v_2)+(d_G(v_1)-1)=t_{G'}(v_2)+d_{G'}(v_1)$ for any $r\in E'(b,v_1)$, $s\in E'(a,v_2)$, so all weak inequalities hold.

From now on we assume that for any reducing set $\{e,q,c\}$: $c\notin A(v_1)$.

Suppose we can choose such reducing set $\{e,q,c\}$ that $c\in A(v_2)\setminus A(v_1)$ and $e\in E(v_2)$.
$$
|A'_s|\geq|A_s|-1\geq t_G(v_2)-1=t_{G'}(v_2)=\max(d_{G'}(a),d_{G'}(b), t_{G'}(v_2))
$$
(because $v_2$ is big in $G'$) for any $s\in E'(b,v_2)$, $A_s=A'_s$ for any $s\in E'(v_1)$, and $|A'_s\cup A'_r|\geq|A_s\cup A_r|-1\geq (t_G(v_2)-1)+d_G(v_1)=t_{G'}(v_2)+d_{G'}(v_1)$ for any $r\in E'(b,v_1)$, $s\in E'(a,v_2)$, so all weak inequalities hold.

From now on we assume that for any reducing set $\{e,q,c\}$: $c\notin A(v_1)\cup A(v_2)$. We choose any such set. $A'_s=A_s$ for any $s\in E'(b,v_2)\cup E'(v_1)$, and $A'_s\cup A'_t=A_s\cup A_t$ for any $r\in E'(b,v_1)$, $s\in E'(a,v_2)$, so all weak inequalities hold.

If a transversal case takes place, then in order to apply \autoref{hall} and prove that edges of $G$ can be properly colored using any color sets $\{A_s\}$, for which weak inequalities hold, we need to prove that for any $F\subseteq E$ the inequality $|F|\leq |A_F|$ holds.

Consider the case when $F$ does not contain a pair of non-adjacent edges. If $F\subseteq E(v_1)$, then taking any $e\in F$, $|A_F|\geq |A_e|\geq d_G(v_1)\geq |F|$. Otherwise $F\subseteq E(a)$, $F\subseteq E(b)$ or $F\subseteq E(a,b,v_i)$ for some $i$.

In all cases except the last one with $i=1,2$, there is $e\in F\setminus E(v_1)$, so $|A_F|\geq|A_e|\geq\max(d_G(a),d_G(b))\geq|F|$ (here we use the fact that by \autoref{great2}, $v_3,\dots,v_n$ are not great in $G$).

In the case $F\subseteq E(a,b,v_1)$, $F\not\subseteq E(v_1)$, so taking any $e\in F\cap E(a,b)\neq\emptyset$, $|A_F|\geq |A_e|\geq t_G(v_1)\geq |F|$.

In the case $F\subseteq E(a,b,v_2)$, $F\not\subseteq E(a)$, so taking any $e\in F\cap E(b,v_2)\neq\emptyset$, $|A_F|\geq |A_e|\geq t_G(v_2)\geq |F|$.

If $F$ contains a pair of non-adjacent edges $e$, $q$, both of which are not in $E(v_1)$, then $|A_F|\geq |A_e\cup A_q|=|A_e|+|A_q|\geq d_G(a)+d_G(b)\geq |E|\geq |F|$.

The remaining case is when $F$ contains some pairs of non-adjacent edges, but in each such pair one of the edges is in $E(v_1)$. Then either $F\setminus E(v_1)\subseteq E(a)$, $F\setminus E(v_1)\subseteq E(b)$, or $F\setminus E(v_1)\subseteq E(a,b,v_i)$ for some $i>1$. In all cases except the last one with $i=2$, we take a pair of non-adjacent edges $e\in F\cap E(v_1)$, $q\in F\setminus E(v_1)$, and $|A_F|\geq|A_e\cup A_q|=|A_e|+|A_q|\geq d_G(v_1)+\max(d_G(a),d_G(b))\geq |F\cap E(v_1)|+|F\setminus E(v_1)|=|F|$ (we again use the fact that by \autoref{great2}, $v_3,\dots,v_n$ are not great in $G$)

In the last case, $F\setminus E(v_1)\subseteq E(a,b,v_2)$ and there is a pair of non-adjacent edges $e\in F\cap E(v_1)$, $q\in F\cap E(v_2)$. If $e\in E(a,v_2)$, $q\in E(b,v_1)$, then $|A_F|\geq |A_e\cup A_q|\geq t_G(v_2)+d_G(v_1)\geq |F|$ (by the last weak inequality). Otherwise $e\in E(b,v_2)$, $q\in E(a,v_1)$, and $|A_F|\geq |A_e\cup A_q|=|A_e|+|A_q|\geq t_G(v_2)+d_G(v_1)\geq |F|$.
\end{proof}

\begin{corollary}
Let $G=(V,E)$ be a multigraph consisting of $n+2$ vertices $v_1,\dots,v_n,a,b$ such that $\{v_1,\dots,v_n\}$ is an independent set, and $\{v_j,a,b\}$ is a clique for $j=1,\dots,n$. For any $v\in V$, $G$ is $f_{G,v}$-edge-choosable, where
$$
f_{G,v}(e)= 
\begin{cases}
d_G(v), & \text{if } e \in E(v),\\
\chi'(G), & \text{otherwise}.
\end{cases}
$$
\end{corollary}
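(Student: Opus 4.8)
The plan is to deduce this corollary directly from \autoref{thma} and \autoref{thmv}, which were deliberately stated with list-size functions that are pointwise no larger than $f_{G,v}$, together with the (routine) monotonicity of edge-choosability in the list sizes. First I would record that monotonicity: if $G$ is $f$-edge-choosable and $g(e)\ge f(e)$ for every $e$, then $G$ is $g$-edge-choosable, since given lists $A_e$ of size $g(e)$ one simply passes to sublists $A'_e\subseteq A_e$ of size $f(e)$ and applies $f$-choosability. I would also invoke \autoref{chidef}, which (the $K_{1,1,n}$ block being line perfect) gives $\chi'(G)\ge d_G(w)$ for every vertex $w$ and $\chi'(G)\ge|E(a,b,v_i)|=t_G(v_i)$ for every $i$; hence $\max(d_G(a),d_G(b),t_G(v_i))\le\chi'(G)$ for all $i$.

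The argument then splits into two cases according to whether $v\in\{a,b\}$ or $v\in\{v_1,\dots,v_n\}$. In the first case, using the $a\leftrightarrow b$ symmetry of the configuration I may assume $v=a$; every edge of $G$ lies in $E(a)$ or in some $E(b,v_i)$, so $f_{G,a}$ is defined on all of $E$, and comparing with the list-size function of \autoref{thma} I get $f_{G,a}(e)=d_G(a)$ on $E(a)$ and $f_{G,a}(e)=\chi'(G)\ge\max(d_G(a),d_G(b),t_G(v_i))$ on $E(b,v_i)$, so monotonicity upgrades the conclusion of \autoref{thma} to $f_{G,a}$-edge-choosability. In the second case, after relabelling the independent vertices so that $v=v_1$, every edge lies in $E(v_1)$, in $E(a,b)$, or in some $E(v_i)$ with $i>1$; comparing with the list-size function of \autoref{thmv} I again find that $f_{G,v_1}(e)$ equals the prescribed value $d_G(v_1)$ on $E(v_1)$ and is $\chi'(G)\ge\max(d_G(a),d_G(b),t_G(v_i))$ on the other two parts, so monotonicity finishes the case.

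There is essentially no obstacle here, since the real content was already extracted into \autoref{thma} and \autoref{thmv}; the only points that need care are purely bookkeeping — checking that the edge set decomposes exactly as the case analyses of those theorems require (so that the stronger list-size functions are defined on all of $E$), and verifying the inequalities $\max(d_G(a),d_G(b),t_G(v_i))\le\chi'(G)$, which are immediate from \autoref{chidef}. Since $\chi'(G)\le\chi_l'(G)$ always, this also records the $K_{1,1,n}$ case of \autoref{main2}, completing (together with the bipartite and $K_4$ cases) the proof of \autoref{main}.
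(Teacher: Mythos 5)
Your proposal is correct and matches the paper's own proof, which likewise just invokes \autoref{thma} for $v\in\{a,b\}$ and \autoref{thmv} for $v\in\{v_1,\dots,v_n\}$; you merely make explicit the monotonicity of choosability and the bound $\max(d_G(a),d_G(b),t_G(v_i))\leq\chi'(G)$ that the paper leaves to its remark following \autoref{thma}.
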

\begin{proof}
If $v=a$ or $v=b$, apply \autoref{thma}. Otherwise, apply \autoref{thmv}.
\end{proof}

This concludes the proof of \autoref{main2}.

\section{Acknowledgments}

I am grateful to Fedor Petrov for introducing me to the LCC, fruitful discussions and useful comments.

\end{document}